\newcommand{\prob}{\mathbb{P}}
\newcommand{\Prob}[1]{\prob\left(#1\right)}
\newcommand{\expec}{\mathbb{E}}
\newcommand{\Exp}[1]{\expec\left[#1\right]}
\newcommand{\plim}{\ensuremath{\stackrel{\prob}{\longrightarrow}}}
\newcommand{\ind}[1]{\mathbbm{1}_{\left\{#1\right\}}}
\newcommand{\dd}{{\rm d}}
\newtheorem{theorem}{Theorem}[section]
\newtheorem{lemma}[theorem]{Lemma}
\newtheorem{corollary}[theorem]{Corollary}
\patchcmd{\@sect}{#8}{\boldmath #8}{}{}
\let\ori@chapter\@chapter
\def\@chapter[#1]#2{\ori@chapter[\boldmath#1]{\boldmath#2}}
\title{Maximal Cliques in Scale-Free Random Graphs}
\author{Thomas Bl\"asius and Maximillian Katzmann\\\vspace{-0.3cm} {\small
Karlsruhe Institute of Technology, Karlsruhe, Germany}
\\\vspace{0.1cm} and \\\vspace{0.1cm} Clara Stegehuis \\ {\small
Faculty of Electrical Engineering, Mathematics and Computer Science,
University of Twente}}
\date{}
\begin{document}
\maketitle
\vspace{-1.1cm}
\begin{abstract}
  \noindent
  We investigate the number of maximal cliques, i.e., cliques that are
  not contained in any larger clique, in three network models:
  Erdős--Rényi random graphs, inhomogeneous random graphs (also called
  Chung--Lu graphs), and geometric inhomogeneous random graphs.  For
  sparse and not-too-dense Erdős--Rényi graphs, we give linear and
  polynomial upper bounds on the number of maximal cliques.  For the
  dense regime, we give super-polynomial and even exponential lower
  bounds.  Although (geometric) inhomogeneous random graphs are
  sparse, we give super-polynomial lower bounds for these models.
  This comes from the fact that these graphs have a power-law degree
  distribution, which leads to a dense subgraph in which we find many
  maximal cliques.  These lower bounds seem to contradict previous
  empirical evidence that (geometric) inhomogeneous random graphs have
  only few maximal cliques.  We resolve this contradiction by
  providing experiments indicating that, even for large networks, the
  linear lower-order terms dominate, before the super-polynomial
  asymptotic behavior kicks in only for networks of extreme size.
\end{abstract}

\section{Introduction}
	
While networks appear in many different applications, many real-world networks were found to share some important characteristics. First of all, often their degree distribution is heavy-tailed, which is sometimes denoted as the network being scale-free. Secondly, they often have a high clustering coefficient, implying that it is likely that two neighbors of a vertex are connected themselves as well. For this reason, random graph models that can achieve both scale-freeness and a high clustering coefficient have been at the center of attention over the last years. 
	
One example of such a model is the popular hyperbolic random graph (HRG)~\cite{krioukov2010}, which has for example been used to model the network of world wide trade~\cite{garcia-perez2016} or the Internet on the Autonomous Systems level~\cite{boguna2010,kleinberg2007}. This random graph model embeds the vertices in an underlying hyperbolic space and connects them with probabilities depending on their distances, where nearby vertices are more likely to connect. The triangle inequality then ensures the presence of many triangles, while the hyperbolic space ensures the presence of a scale-free degree distribution. Recently, the geometric inhomogeneous random graph (GIRG) was proposed as a generalization of HRG.  It combines power-law distributed weights with Euclidean space, making the model simpler to analyze~\cite{bringmann2015}. 
	
While the hyperbolic random graph and the GIRG have been designed to exhibit high clustering and a scale-free degree distribution, the question remains whether other properties of this model match real-world data. For this reason, many properties of the GIRG or hyperbolic random graph have been analyzed mathematically, such as the maximum clique size~\cite{Blasius2017}, number of $k$-cliques~\cite{michielan2021}, spectral gap~\cite{kiwi2018} and separator size~\cite{Hyper_Rando_Graph_Separ_ESA2016,lengler2017}. 
	
In this paper, we focus on another network property: the number of maximal cliques, i.e., cliques that are not part of any larger clique. 
Cliques in general are an important indicator for structural properties of a network. Indeed, the number of large cliques is a measure of the tendency of a network to cluster into groups. Small cliques of size 3 (triangles) on the other hand, can form an indication of the transitivity of a network or its clustering coefficient.
	
To study these structural clique-based properties, however, all cliques of a given size need to be listed, which can be a computationally expensive process. To list all network cliques, it suffices to list only all maximal cliques, as all smaller cliques can be generated from at least one maximal clique. For this reason, enumerating all maximal cliques of a graph is at the heart of our understanding of cliques in general. 

 
For enumerating all maximal cliques, an output-polynomial algorithm~\cite{tsukiyama1977} exists, which can enumerate all maximal cliques efficiently if the graph contains only few of them. This creates a link between enumeration and counting: if the maximal clique count is low, then it is possible to efficiently enumerate them. There also exist highly efficient implementations to enumerate all maximal cliques~\cite{Listi_Maxim_Cliqu_Spars_ISAAC2010,Listi_Maxim_Cliqu_Large_jour2013,Listi_Maxim_Cliqu_Large_SEA2011}. However, for a given graph, it is usually not known a priori how many maximal cliques it has. If this number is large, enumerating all maximal cliques can still take exponential time. However, in practice, enumerating the number of maximal cliques often takes a short amount of time for many real-world instances as well as in realistic network models~\cite{Exter_Valid_Avera_Analy_ESA2022}. 
In this paper, we therefore focus on the number of maximal cliques in the GIRG random graph, that is, the maximal clique count. As the GIRG possesses the two main characteristics that are essential to many real-world networks, scale-freeness and an underlying geometry, we believe that investigating the number of maximal cliques in the GIRG can provide insights into in why enumerating the number of maximal cliques can often be done efficiently for many real-world networks. 
	
To investigate the influence of the different properties of
scale-freeness and clustering, we investigate the number of maximal
cliques in three steps. First, we investigate a model without
heavy-tailed degrees and with a small clustering coefficient, the
Erd\H{o}s--R\'enyi model $G(n, p)$; see Section~\ref{sec:ER}. We then
investigate the GIRG model (Section~\ref{sec:GIRG}), which has both
clustering and scale-free degrees. Finally, in Section~\ref{sec:IRG},
we investigate the Inhomogeneous Random Graph (IRG), a model that is
scale-free but has a small clustering coefficient.  We complement our
theoretical bounds with experiments in Section~\ref{sec:experiments}.
In all models, we will be interested in the large $n$ limit. That is, we investigate how the number of maximal cliques scales in the number of nodes $n$ when $n$ grows large.
Our main findings can be summarized as follows; also see
Table~\ref{tab:result_summary_detailed} for an overview of our
results.
\begin{itemize}
\item There is a strong dependence on the density of the network.  For
  the Erdős--Rényi model ($G(n, p)$) we obtain a linear upper bound
  for sparse graphs ($O(n)$ edges) and a polynomial upper bound for
  non-dense graphs ($O(n^{2 - \varepsilon})$ edges for any
  $\varepsilon > 0$).  For dense graphs on the other hand
  ($\Omega(n^2)$ edges), we obtain a super-polynomial lower bound.  If
  the density is high enough, our lower bound is even exponential.
\item This insight carries over to the IRG and GIRG models.  Though
  they are overall sparse, they contain sufficiently large dense
  subgraphs that allow us to obtain super-polynomial lower bounds.
\item In the IRG model with power-law exponent $\tau \in (2, 3)$ the
  small maximal cliques localize: asymptotically maximal cliques of
  constant size $k>2$ are formed by $k - 2$ hubs of high degree
  proportional to $n^{1 / (\tau - 1)}$ and two vertices of lower
  degree proportional to $n^{(\tau-2)/(\tau-1)}$.
\item We complement our theoretical lower bounds with experiments
  showing that the super-{po\-ly\-no\-mi\-al} growth becomes only relevant for
  very large networks. 
\end{itemize}

\paragraph{Discussion and Related Work.}

Although cliques themselves have been studied extensively in the
literature, there is, to the best of our knowledge, only little
previous work on the number of \emph{maximal} cliques in network
models.  In fact, the only theoretical analysis we are aware of is the
recent preprint by Yamaji~\cite{yamaji2023}, giving bounds for
hyperbolic random graphs (HRG) and random geometric graph (RGG), which
are also shown in Table~\ref{tab:result_summary_detailed}.
Interestingly, this includes the upper bound of
$\exp(O(n^{\frac{3 - \tau}{6} + \varepsilon}))$ for the HRG model.  In
contrast to that, we give the asymptotically larger lower bound
$\exp(\Omega(n^{\frac{3 - \tau}{4} - \varepsilon}))$ for the
corresponding GIRG variant.  Thus, there is an asymptotic difference
between the HRG and the GIRG model.

This is surprising as the GIRG model is typically perceived as a
generalization of the HRG model.  More precisely, there is a mapping
between the two models such that for every HRG with average degree
$d_{\mathrm{HRG}}$ there exist GIRGs with average degree
$d_{\mathrm{GIRG}}$ and $D_{\mathrm{GIRG}}$ with
$d_{\mathrm{GIRG}} \le d_{\mathrm{HRG}} \le D_{\mathrm{GIRG}}$ that
are sub- and supergraphs of the HRG, respectively.  Moreover,
$d_{\mathrm{GIRG}}$ and $D_{\mathrm{GIRG}}$ are only a constant factor
apart and experiments indicate that
$d_{\mathrm{HRG}} = d_{\mathrm{GIRG}}\cdot(1 + o(1))$, i.e., every HRG has a
corresponding GIRG that is missing only a sublinear number of
edges~\cite{Effic_gener_geome_inhom_jour2022}.  In the case of maximal
cliques, however, this minor difference between the models leads to an
asymptotic difference.

Besides this theoretical analysis, it has been observed empirically
that the number of maximal cliques in most real-world networks as well
as in the GIRG and the IRG model is smaller than the number of edges
of the graph~\cite{Exter_Valid_Avera_Analy_ESA2022}.  This indicates
linear scaling in the graph size with low constant factors and small
lower-order terms, which seems to be a stark contradiction to the
super-polynomial lower bounds we prove here.  We resolve this
contradiction with our experiments in Section~\ref{sec:experiments},
where we observe that the graph size has to be quite large before the
asymptotic behavior kicks in, i.e., we observe the super-polynomial
scaling as predicted by our theorems but on such a low level that it
is overshadowed by the linear lower-order terms.

\paragraph{Notation and setting.} In the rest of this paper, we will be interested in results in the large $n$ limit, where $n$ denotes the number of nodes in the random graph. We therefore use classical asymptotic notation, in terms of the graph size $n$. For any two non-negative functions $f(n),g(n)$ we will write $f(n) \in o(g(n))$ if $\lim_{n \to \infty} f(n)/g(n) = 0$; $f(n) \in O(g(n))$ if $\limsup_{n \to \infty} f(n)/g(n) < \infty$; 
$f(n) \in \Omega(g(n))$ if $\liminf_{n \to \infty} f(n)/g(n) > 0$; 
$f(n) \in \Theta(g(n))$ if $f(n) \in O(g(n))$ and $f(n) \in \Omega(g(n))$. Moreover, we will say that a sequence of events $\{\mathcal{E}_n\}_{n \geq 1}$ happens with high probability (w.h.p.) if $\lim_{n \to \infty} \Prob{\mathcal{E}_n}=1$.





\begin{table}
  \newcommand{\rot}[2]{\multirow{#1}{*}{\rotatebox[origin=c]{90}{#2}}}
  \renewcommand{\arraystretch}{1.3}
  \centering
  \begin{tabular}{clcr}
    \toprule
    \multicolumn{2}{c}{Model} & Maximal cliques                                             & Reference                                                                                     \\
    \midrule
    \rot{4}{$G(n, p)$}        & $p = 1 - \Theta(\frac{1}{n})$                               & $2^{\Omega(n)}$                                       & Theorem~\ref{thm:erdense}             \\
                              & $p \in \Theta(1)$                                             & $n^{\Omega(\log n)}$                                  & Theorem~\ref{thm:er-lower-bound}      \\
                              & $p \in O(\frac{1}{n^a})$                                      & $n^{O(1)}$                                            & Theorem~\ref{thm:sparseer}            \\
                              & $p \in O(\frac{1}{n})$                                        & $O(n)$                                                & Theorem~\ref{thm:sparseer}            \\
    \midrule
    \multicolumn{2}{l}{IRG}   & $\exp(\Omega(n^{\frac{3 - \tau}{4} - \varepsilon} \log n))$ & Theorem~\ref{thm:max_cliques_irg}                                                             \\
    \midrule
    \rot{4}{GIRG}             & $d$-dim~torus, $T = 0$                                      & $\exp(\Omega(n^{\frac{3 - \tau}{4} - \varepsilon}))$  & Corollary~\ref{cor:maxcliquesgirg}    \\
                              & $d$-dim~torus, $T > 0$
                                                                                            & $\exp(\Omega(  {n^{\frac{(3 - \tau)}{5}} (\varepsilon \log n)^{-(1/2)}} ))$ & Corollary~\ref{cor:maxcliquesgirg-non-threshold} \\
                              & $2$-dim~square, $T = 0$                                     & $\exp(\Omega(n^{\frac{3 - \tau}{10} - \varepsilon}))$ & Theorem~\ref{thm:girg_non_torus}      \\
                              & $2$-dim~square, $T > 0$                                     & $\exp(\Omega(  n^{\frac{3 - \tau}{10} - \varepsilon}))$ & Theorem~\ref{thmnonzero2dim}          \\
    \midrule
    \rot{2}{RGG}              & 2-dim, dense                                                & $\exp(\Omega(n^{\frac{1}{3}}))$                       & \cite{yamaji2023}                     \\
                              & 2-dim, dense                                                & $\exp(O(n^{\frac{1}{3} + \varepsilon}))$              & \cite{yamaji2023}                     \\
    \midrule    
    \rot{2}{HRG}              &                                                             & $\exp(\Omega(n^{\frac{3 - \tau}{6}}))$                & \cite{yamaji2023}                     \\
                              &                                                             & $\exp(O(n^{\frac{3 - \tau}{6} + \varepsilon}))$       & \cite{yamaji2023}                     \\
    \bottomrule
  \end{tabular}
  \caption{Summary of our and other results on the number of maximal cliques in different random graph models and their scaling in the number of vertices. }
  \label{tab:result_summary_detailed}
\end{table}

\section{Erd\H{o}s--R\'enyi Random Graph }
\label{sec:ER}

An Erdős--Rényi random graph~\cite{Gilbert1959, Erdoes2022} $G(n, p)$ has $n$ vertices and each pair
of vertices is connected independently with probability $p$.  We give
bounds on the number of maximal cliques in a $G(n, p)$ depending on
$p$.  Roughly speaking, we give super-polynomial lower bounds for the
dense regime and polynomial upper bounds for a sparser regime.
Specifically, we first give a general lower bound that is
super-polynomial if $p$ is non-vanishing for growing $n$, i.e., if
$p \in \Omega(1)$.  Note that $p \in \Omega(1)$ yields a dense graph
with a quadratic number of edges in expectation.  For super-dense
graph with $p = 1 - c/n$ for a constant $c$, we strengthen this lower
bound to exponential.  In contrast to this, we give a polynomial upper
if $p \in O(n^{-a})$ for any constant $a > 0$.  For sparse graphs with
$p \in O(n^{-1})$, yielding graphs with $\Theta(n)$ edges in
expectation, our upper bound on the number of maximal cliques is
linear.  We start with the general lower bound.

\begin{theorem}
  \label{thm:er-lower-bound}
  Let $N$ be the number of maximal cliques in a $G(n, p)$.  Then, for $n$ sufficiently large,
  \begin{equation}
    \Exp{N} \ge n^{\frac{\log(n) / 2 - \log\log n + \log\log(1/p)}{\log(1/p)}}\cdot \frac{1 - o(1)}{e}.
  \end{equation}
\end{theorem}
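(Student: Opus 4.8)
The plan is to apply the first-moment method to maximal cliques of a single, carefully chosen size $k$, and then optimize over $k$. Write $M_k$ for the number of maximal cliques of size exactly $k$; since every such clique is in particular a maximal clique, $N \ge M_k$ and hence $\Exp{N} \ge \Exp{M_k}$ for every $k$. First I would compute $\Exp{M_k}$ exactly. A fixed $k$-set $S$ forms a clique with probability $p^{\binom{k}{2}}$, and, conditionally on this, $S$ is maximal iff no vertex outside $S$ is adjacent to all of $S$; each of the $n-k$ external vertices fails to be a common neighbour independently with probability $1-p^k$. Since the edges inside $S$ and the edge-bundles from the distinct external vertices to $S$ are pairwise disjoint, all these events are mutually independent, so by linearity
\[
  \Exp{M_k} = \binom{n}{k}\, p^{\binom{k}{2}} \left(1 - p^k\right)^{n-k}.
\]

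The factor $(1-p^k)^{n-k}$ is the probability of maximality, and it is the source of the $1/e$ in the statement: I would take $k = \ceil{\log n / \log(1/p)}$, the smallest integer with $p^k \le 1/n$. For this $k$ one has $(1-p^k)^{n-k} \ge (1-1/n)^{n-k} \ge (1-1/n)^n = (1-o(1))/e$, so it remains to show that $\binom{n}{k}\, p^{\binom{k}{2}}$ already exceeds the displayed power of $n$.

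For the main term I would use $\binom{n}{k} \ge (n/k)^k$ and take logarithms, giving $\log\bigl(\binom{n}{k}\, p^{\binom{k}{2}}\bigr) \ge g(k) := k\log n - k\log k - \tfrac{k(k-1)}{2}\log(1/p)$. The key observation is that the awkward numerator of the target exponent collapses: writing $k^\ast = \log n/\log(1/p)$, we have $-\log\log n + \log\log(1/p) = -\log(\log n/\log(1/p)) = -\log k^\ast$, so the claimed bound equals $\exp\bigl(\tfrac{(k^\ast)^2}{2}\log(1/p) - k^\ast \log k^\ast\bigr)$ times $(1-o(1))/e$. Evaluating $g$ at $k^\ast$ (and using $k^\ast\log(1/p)=\log n$) yields exactly this leading expression plus a surplus of $\tfrac12\log n$, which is what guarantees the inequality with room to spare.

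The main obstacle — and essentially the only real work — is the bookkeeping for passing from the real optimizer $k^\ast$ to the integer $\ceil{k^\ast}$ and controlling the lower-order corrections (the $(n/k)^k$ bound, rounding, and the maximality factor). The function $g$ is concave, so $g(\ceil{k^\ast}) \ge \min(g(k^\ast),g(k^\ast+1))$; a short computation of $\int_{k^\ast}^{k^\ast+1} g'(k)\,dk$ shows that the $\log(1/p)$-dependent parts cancel and $g(k^\ast+1) = g(k^\ast) - O(\log\log n)$, so both endpoints exceed the target logarithm by $\tfrac12\log n - O(\log\log n)$. Thus this surplus simultaneously absorbs the rounding error and the $(1-o(1))/e$ maximality factor, yielding the stated inequality for all sufficiently large $n$.
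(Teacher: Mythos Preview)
Your approach is essentially identical to the paper's: compute $\Exp{M_k}$ exactly, lower-bound $\binom{n}{k}$ by $(n/k)^k$, choose $k=\log n/\log(1/p)$ so that $p^k=1/n$ and the maximality factor becomes $(1-o(1))/e$, and simplify. The paper simply sets $k$ to this real value and weakens $p^{\binom{k}{2}}$ to $p^{k^2/2}$ to land exactly on the stated exponent, without addressing integrality; you instead keep $p^{\binom{k}{2}}$, obtain a $\tfrac12\log n$ surplus, and spend it on the rounding to $\lceil k^\ast\rceil$ --- a slightly more careful version of the same argument.
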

\begin{proof}
  Let $N_k$ be the number of maximal cliques of size $k$.  To estimate
  $\Exp{N_k}$, note that the probability that a fixed subset
  $C \subseteq V$of $|C| = k$ vertices forms a clique is
  $p^{k(k-1)/2}$.  Moreover, it is maximal if none of the other
  $n - k$ vertices is connected to all $k$ vertices of $C$, which
  happens with probability $(1 - p^k)^{n - k}$.  As the two events are
  independent and there are ${n \choose k}$ vertex sets of size $k$,
  we obtain
  \begin{equation}
    \Exp{N_k} = {n\choose k}p^{k(k-1)/2}(1-p^k)^{n-k}.
  \end{equation}
  Using that ${n\choose k} \ge (n / k)^k$ and increasing the exponents of the probabilities, we obtain
  \begin{equation*}
    \Exp{N_k} \ge \left( \frac{n}{k} \right)^kp^{k^2/2}(1-p^k)^{n}.
  \end{equation*}

  We now set $k=\log(n)/\log(1/p) = -\log(n)/\log(p)$, which yields $p^k = n^{-1}$.  Thus, in the above bound, the term $n^k p^{k^2/2}$ simplifies to $n^k n^{-k/2} = n^{k/2}$.  Moreover, the term $(1 - p^k)^{n}$ simplifies to $(1 - 1/n)^n$, which converges to $1/e$ for $n \to \infty$.  Thus, we obtain
  \begin{align*}
    \Exp{N_k} &\ge n^{k/2} \frac{1}{e k^k} (1 - o(1))\\
              &= n^{\frac{\log(n) / 2}{\log(1/p)}} \cdot \left( \frac{\log(n)}{\log(1/p)} \right)^{-\frac{\log(n)}{\log(1/p)}} \cdot \frac{1 - o(1)}{e}.\\
    \intertext{Changing the base of the second factor yields}
              &= n^{\frac{\log(n) / 2}{\log(1/p)}} \cdot e^{-\frac{\log(n)}{\log(1/p)} \cdot \log\left( \frac{\log(n)}{\log(1/p)} \right)} \cdot \frac{1 - o(1)}{e}\\
              &= n^{\frac{\log(n) / 2}{\log(1/p)}} \cdot n^{-\frac{\log\log n - \log\log(1/p)}{\log(1/p)}} \cdot \frac{1 - o(1)}{e}\\
              & = n^{\frac{\log(n) / 2 - \log\log n + \log\log(1/p)}{\log(1/p)}}\cdot \frac{1 - o(1)}{e}.
  \end{align*}
  As there are clearly at least as many maximal cliques as maximal cliques of size $k$, claimed bound for $\Exp{N}$ follows.
\end{proof}

This means that in a dense Erd\H{o}s--R\'enyi random graph (constant $p$), the expected number of maximal cliques is super-polynomial in $n$.  In the following, we show that, when the graph gets even denser, the number of maximal cliques even grows exponentially.  For this, we prove the existence of an induced subgraph that has many maximal cliques.  Specifically, we aim to find a large \emph{co-matching}, i.e., the complement graph of a matching (or equivalently, a co-matching).

\begin{lemma}
  \label{lem:co-matching-nr-cliques}
  Let $G$ be a co-matching on $2k$ vertices.  Then $G$ has $2^k$ maximal cliques.
\end{lemma}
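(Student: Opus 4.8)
The plan is to identify the maximal cliques of $G$ with the ways of choosing exactly one endpoint from each non-edge of the underlying matching. First I would fix notation: write the $2k$ vertices as pairs $\{a_1,b_1\},\dots,\{a_k,b_k\}$, where the $\{a_i,b_i\}$ are precisely the edges of the matching $M$ whose complement is $G$. By definition of the complement, two distinct vertices are adjacent in $G$ if and only if they do not form one of these pairs. In particular $a_i$ and $b_i$ are non-adjacent, while any two vertices taken from different pairs are adjacent.

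The first key step is to characterise the cliques of $G$. I would argue that a clique can contain at most one vertex of each pair $\{a_i,b_i\}$, since $a_i$ and $b_i$ are non-adjacent. Conversely, any vertex set $S$ containing at most one vertex from each pair is automatically a clique: any two of its vertices lie in different pairs and are therefore adjacent. Hence the cliques of $G$ are exactly the partial transversals of the pairs.

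The second step is to pin down maximality. A clique $S$ that omits some pair entirely, say it contains neither $a_i$ nor $b_i$, is not maximal, because $a_i$ is adjacent to every vertex of $S$ (all of which lie in other pairs), so $S\cup\{a_i\}$ is a strictly larger clique. Conversely, a clique containing exactly one vertex from each of the $k$ pairs cannot be extended: every remaining vertex is the partner of some chosen vertex and is therefore non-adjacent to it. Thus the maximal cliques are precisely the full transversals, i.e.\ the sets obtained by selecting one vertex from each pair.

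Finally, counting these transversals yields the claim: there are two independent choices for each of the $k$ pairs, for a total of $2^k$ maximal cliques. I do not expect a genuine obstacle here; the only point requiring care is the maximality argument, where one must verify both that every full transversal is maximal and that every clique missing a pair fails to be maximal, so that the count comes out exactly $2^k$ rather than merely as a lower bound.
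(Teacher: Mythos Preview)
Your proof is correct and follows essentially the same approach as the paper: both identify the maximal cliques of $G$ with the sets choosing exactly one endpoint from each matched pair (the paper phrases this dually as the maximal independent sets of the complement matching). Your version is simply more explicit about verifying both directions of the maximality characterisation.
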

\begin{proof}
  The complement $\overline G$ of $G$ is a matching with $k$ edges.  The maximal independent sets of $\overline G$ are the vertex sets that contain for each edge exactly one of its vertices.  Thus, $\overline G$ has $2^k$ maximal independent sets, which implies that $G$ has $2^k$ maximal cliques.
\end{proof}

With this, we can show an exponential lower bound for super-dense Erdős--R\'enyi graphs.

\begin{theorem}\label{thm:erdense}
  For every $c>0$, there exists a $\zeta>0$ and $n'>0$ such that $G(n,1-c/n)$ contains at least $2^{\zeta n}$ cliques with high probability for all $n\geq n'$. 
\end{theorem}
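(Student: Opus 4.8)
The plan is to exhibit, with high probability, a large induced subgraph of $G(n,1-c/n)$ that is a co-matching, and then invoke Lemma~\ref{lem:co-matching-nr-cliques}. The crucial observation is that passing to complements turns this into a question about a \emph{sparse} random graph: the complement of $G(n,1-c/n)$ is exactly $G(n,c/n)$, and a set $S$ of $2k$ vertices induces a co-matching in $G(n,1-c/n)$ if and only if it induces a perfect matching in $\overline{G}=G(n,c/n)$, i.e.\ the $2k$ vertices of $S$ span exactly $k$ disjoint edges of $\overline{G}$ and nothing else. Such a set is precisely an \emph{induced matching} of size $k$ in $G(n,c/n)$. Thus it suffices to show that, for some constant $\zeta>0$ depending only on $c$, the sparse graph $G(n,c/n)$ contains an induced matching of size $\zeta n$ with high probability; Lemma~\ref{lem:co-matching-nr-cliques} then produces $2^{\zeta n}$ maximal cliques inside the corresponding induced co-matching.

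First I would control the degrees, since the expected average degree of $G(n,c/n)$ is the constant $c$ but individual degrees fluctuate. Fix a large constant $D=D(c)$ and let $V_{\mathrm{low}}$ be the set of vertices of degree at most $D$ in $G(n,c/n)$. Because $\deg(v)$ is asymptotically $\mathrm{Poisson}(c)$ distributed, choosing $D$ large makes the probability that a fixed vertex is ``high-degree'' as small as we like; a first-moment computation then gives $|V_{\mathrm{low}}|\ge(1-o(1))n$ and, more importantly, that the expected total degree carried by high-degree vertices is $n\cdot c\cdot\Pr[\mathrm{Po}(c)\ge D](1+o(1))$, which is below $\varepsilon n$ for $D$ large. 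Hence at most $\varepsilon n$ edges are incident to a high-degree vertex. Since $G(n,c/n)$ has $(c/2)(1-o(1))n$ edges with high probability, the induced subgraph $H^{\ast}:=G(n,c/n)[V_{\mathrm{low}}]$ has at least $\Omega(n)$ edges and maximum degree at most $D$; all these statements hold with high probability by standard concentration of the relevant edge counts.

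Next I would extract the matching from $H^{\ast}$ by a greedy argument on its edges: repeatedly pick any remaining edge $\{u,v\}$, add it to the matching, and delete every edge incident to $N[u]\cup N[v]$. Because $H^{\ast}$ is an \emph{induced} subgraph of $\overline{G}$, forbidding exactly these edges keeps the chosen edges pairwise non-adjacent in $\overline{G}$, so the output is a genuine induced matching of $\overline{G}$. As $|N[u]\cup N[v]|\le 2(D+1)$ and every vertex has degree at most $D$, each step deletes $O(D^{2})$ edges; starting from $\Omega(n)$ edges, the process therefore runs for $\Omega(n)/O(D^{2})=\Omega(n)$ steps. This yields an induced matching of size $\zeta n$ in $G(n,c/n)$ for a constant $\zeta=\zeta(c)>0$, hence an induced co-matching $H$ on $2\zeta n$ vertices of $G(n,1-c/n)$ with $2^{\zeta n}$ maximal cliques.

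Finally, a short bookkeeping step upgrades these to distinct maximal cliques of the whole graph. Each maximal clique $C$ of $H$ is a clique of $G(n,1-c/n)$ and extends to at least one maximal clique of the whole graph; if two maximal cliques $C_{1},C_{2}$ of $H$ extended to the same maximal clique $M$, then $C_{1}\cup C_{2}\subseteq M$ would be a clique contained in $V(H)$, hence a clique of the induced subgraph $H$ containing the maximal clique $C_{1}$, forcing $C_{1}=C_{2}$. So the $2^{\zeta n}$ maximal cliques of $H$ inject into the maximal cliques of $G(n,1-c/n)$, giving the claimed $2^{\zeta n}$ bound. The main obstacle is the induced-matching estimate, and within it the one delicate point is the degree control: without first discarding the rare high-degree vertices, a single greedy step could delete too many edges at once, so the heart of the argument is showing that removing those vertices costs only an $\varepsilon$-fraction of the edges while capping the maximum degree by a constant, after which the greedy bound is immediate.
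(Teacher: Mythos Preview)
Your proposal is correct and follows essentially the same route as the paper: pass to the complement $G(n,c/n)$, restrict to the induced subgraph on vertices of bounded degree (where the paper cites \cite{hofstad2009} and you give the Poisson/first-moment argument directly), greedily extract a linear-size induced matching by repeatedly deleting $O(D^{2})$ edges per step, and apply Lemma~\ref{lem:co-matching-nr-cliques}. Your final injection step, showing that distinct maximal cliques of the induced co-matching extend to distinct maximal cliques of the full graph, is a point the paper leaves implicit; your argument for it is correct and a worthwhile addition.
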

\begin{proof}
  A co-matching in $G(n,1-c/n)$ corresponds to an induced matching in $G(n,c/n)$. Now fix $M>1$. Then, by~\cite[Theorem 5.12]{hofstad2009}, with high probability the Erd\H{o}s--R\'enyi random graph contains a linear number of vertices of degree at most $M$ and at least $1$. Denote the reduced graph with only vertices of degree at most $M$ by $G_{\leq M}$, which has a linear number of edges. Now we construct an induced matching of linear size in $G_{\leq M}$ as follows. Start with any edge $\{u,v\}$ in $G_{\leq M}$, and add it to the matching. Then, remove $u$, $v$ and all neighbors of $u$ and $v$ from $G_{\leq M}$. This removes at most $2M^2$ edges from $G_{\leq M}$, as all degrees are bounded by $M$. Then, pick another edge and continue this process until $G_{\leq M}$ contains no more edges. As this process removes only a constant number of edges after picking a new edge, at least a linear number of edges will be added before the process finishes. Thus, there is an induced matching of at least $\zeta n$ with high probability, which yields the claim due to Lemma~\ref{lem:co-matching-nr-cliques}.    
\end{proof}

Next we consider less dense Erdős--Rényi graphs with $p \in O(n^{-a})$
for a constant $a \in (0, 1]$ and prove a polynomial upper bound on
the number of maximal cliques.  The degree of the polynomial depends
on $a$.  For sparse graphs with $p \in O(n^{-1})$, our bound is
linear.
		
\begin{theorem}
  \label{thm:sparseer}
  Let $p = (c / n)^a$ for constants $c > 0$ and $a \in (0, 1]$ and let
  $N$ be the number of maximal cliques in a $G(n, p)$.  Then
  $\Exp{N} \in O(n^x)$ with
  \begin{equation*}
    x = \left\lceil \frac{1}{a}\right\rceil - a \cdot {\left\lceil
        \frac{1}{a}\right\rceil \choose 2}.
  \end{equation*}
\end{theorem}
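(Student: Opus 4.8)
The plan is to bound the number of maximal cliques from above by the total number of cliques of each size and then optimize over the clique size. Since every maximal clique of size $k$ is in particular a clique of size $k$, writing $N_k$ for the number of maximal cliques of size $k$ gives $\Exp{N_k} \le \binom{n}{k} p^{k(k-1)/2}$, where I simply drop the maximality factor $(1-p^k)^{n-k} \le 1$ that appears in the exact expression. Using $\binom{n}{k} \le n^k$ and substituting $p = (c/n)^a$, each summand becomes $c^{a k(k-1)/2}\, n^{f(k)}$ with the exponent
\begin{equation*}
  f(k) = k - \frac{a}{2}k(k-1).
\end{equation*}
The whole argument then reduces to showing that $\sum_{k=1}^n c^{ak(k-1)/2}\, n^{f(k)} = O(n^{x})$, i.e., that the sum is governed, up to constant factors, by its largest term.

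The first step is to locate the maximizer of $f$. Viewed as a real function, $f$ is a downward parabola with vertex at $k^\star = 1/a + 1/2$, so the integer that maximizes $f$ is the integer nearest to $k^\star$. I would verify that, writing $m = \ceil{1/a}$ so that $m - 1 < 1/a \le m$, one has $k^\star \in (m - \tfrac12,\, m + \tfrac12]$, whence $m$ is a nearest integer and $f(m) = x$; in the boundary case $1/a \in \mathbb{Z}$ both $m$ and $m+1$ attain the maximum, but still with value $x$. This identifies $x = f(\ceil{1/a})$ as the claimed exponent and matches the stated formula via $f(\ceil{1/a}) = \ceil{1/a} - a\binom{\ceil{1/a}}{2}$.

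The second step is to control the sum by splitting at $k_1 = \ceil{1/a} + 1$; write $t_k = c^{ak(k-1)/2}\, n^{f(k)}$ for the $k$-th summand. The head $\sum_{k=1}^{\ceil{1/a}} t_k$ consists of $O(1)$ terms, each with a bounded constant $c^{ak(k-1)/2}$ and exponent $f(k) \le x$, hence contributes $O(n^x)$. For the tail I would use the ratio $t_{k+1}/t_k = n p^k = c^{ak} n^{1-ak}$, which for $k \ge k_1$ has $1 - ak \le -a < 0$; consequently, for $n$ large enough (depending only on $a$ and $c$) this ratio drops below $1/2$, the tail is dominated by a geometric series, and $\sum_{k \ge k_1} t_k = O(t_{k_1}) = O(n^{f(k_1)}) = O(n^x)$ since $f(k_1) \le x$. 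Adding head and tail yields $\Exp{N} = O(n^x)$.

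The main obstacle I anticipate is the tail estimate when $c > 1$: there the constant $c^{ak(k-1)/2}$ grows with $k$ and competes with the decaying factor $n^{f(k)}$, so a crude term-by-term bound does not immediately give geometric decay. The ratio test resolves this cleanly because the growing constant and the decay are packaged together in $n p^k = n (c/n)^{ak}$, which is eventually small for every $k > 1/a$ once $n$ exceeds a threshold determined by $a$ and $c$; making this threshold explicit and uniform over the whole tail is the one place that requires genuine care.
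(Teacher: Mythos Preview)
Your proposal is correct and follows essentially the same approach as the paper: both bound $\Exp{N_k}$ by the number of (not necessarily maximal) $k$-cliques, reduce to the exponent $f(k)=k-\tfrac{a}{2}k(k-1)$, identify the integer maximizer as $\lceil 1/a\rceil$, and show the tail sums geometrically. The only cosmetic differences are that the paper uses $\binom{n}{k}\le (en/k)^k$ and cuts the tail at $k\ge 4/a+1$ with a direct per-term bound, while you use $\binom{n}{k}\le n^k$ and a ratio test from $k_1=\lceil 1/a\rceil+1$; your anticipated uniformity issue is handled exactly as you suggest, since $np^k=n(c/n)^{ak}$ is monotone in $k$ once $n>c$.
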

\begin{proof}
  As in Theorem~\ref{thm:er-lower-bound}, let $N_k$ be the number of
  maximal cliques of size $k$.  Note that the number of maximal
  cliques is upper bounded by the number of (potentially non-maximal)
  cliques.  Thus, we obtain
  \begin{equation*}
    \Exp{N_k} \le {n\choose k}p^{\frac{k(k-1)}{2}}.
  \end{equation*}
  Using that ${n \choose k} \le (en / k)^k$, inserting
  $p = (c / n)^a$, and rearranging yields
  \begin{align}
    \Exp{N_k}
    &\le \left( \frac{en}{k} \right)^k
      \left( \frac{c}{n} \right)^{a\frac{k(k-1)}{2}}\notag\\
    &= \left( \frac{ce}{k} \right)^k
      \left( \frac{c}{n} \right)^{a\frac{k(k-1)}{2} - k}.
      \label{eq:er-upper-bound-expectation}
  \end{align}

  We first argue that we can focus on the case where $k$ is constant
  as the above term vanishes sufficiently quickly for growing $k$.
  For this, note that $a {k(k-1)} / {2} - k \ge k$ if $k \ge 4/a + 1$.
  Thus, as $c / n < 1$ for sufficiently large $n$, the second factor
  of Equation~\eqref{eq:er-upper-bound-expectation} is upper bounded
  by $(c / n)^k$.  For $k \ge 4/a + 1$, it then follows that
  $\Exp{N_k} \le ( {c^2e}/ (kn) )^k$.  For sufficiently large $n$, the
  fraction is smaller than $1$ and thus the sum over all $N_k$ for
  larger values of $k$ is upper bounded by a constant due to the
  convergence of the geometric series.

  Focusing on $k \in \Theta(1)$ and ignoring constant factors, we
  obtain
  \begin{equation*}
    \Exp{N} \in O\left( \max_{k \in \mathbb N^+} \left\{n^{x(k)}\right\} \right)
    \text{ with }
    x(k) = {k - a\frac{k(k-1)}{2}}.
  \end{equation*}

  To evaluate the maximum, note that $x(k)$ describes a parabola with
  its maximum at $k_0 = 1/a + 1/2$.  However, $k_0$ may not be
  integral.  To determine the integer $k$ that maximizes $x(k)$, note
  that for $a \in [\frac{1}{i}, \frac{1}{i - 1}]$ with $i \in \mathbb
  N^+$, we get $k_0 \in [i - \frac{1}{2}, i + \frac{1}{2}]$.  Thus,
  $i$ is the closest integer to $k_0$.  As the parabola is symmetric
  at its maximum $k_0$, the exponent $x(k)$ is maximized for the
  integer $k = i = \lceil \frac{1}{a} \rceil$.  Substituting $k(k-1)/2
  = {k \choose 2}$ yields the claim.
\end{proof}

 
	\section{Geometric Inhomogeneous Random Graphs (GIRG)}\label{sec:GIRG}
	While the Erd\H{o}s--R\'enyi random graph is homogeneous, and does not contain geometry, we now investigate the number of maximal cliques in a model that contains both these properties, the Geometric Inhomogeneous Random Graph (GIRG)~\cite{bringmann2015}. We will use similar notation as in~\cite{bringmann2015}, except for the parameters $\alpha$ and $\beta$, which we will replace by $1/T$ and $\tau$ respectively, to be more consistent with the literature on other similar models~\cite{krioukov2010}. In this model, each vertex $v$ has a weight, $w_v$ and a position $x_v$.
	The weights are independent copies of a power-law random variable $W$ with exponent $\tau$, i.e.,
	\begin{equation}\label{eq:pl}
		1-F(w):=\mathbb{P}(W > w) = 
		w^{1-\tau}, 
	\end{equation} 
	for all $w\geq 1$. We impose the condition $\tau \in (2,3)$, to ensure that the weights have finite mean but unbounded variance. The parameter $\mu$ denotes the mean of this distribution, and can be computed as $\mu = (\tau-2)^{-1}$.
	The vertex positions $x_1,...,x_n$ are independent copies of a uniform random variable on the $d$-dimensional torus $\mathbb{T}^d = \mathbb{R}^d/\mathbb{Z}^d$.
	
	An edge between any two vertices $u,v \in V$ of the GIRG appears independently with a probability $p_{uv}$ determined by the weights and the positions of the vertices
	\begin{equation} \label{eq:edgeprob}
		p_{uv}  = \min \left\{\left(\frac{w_u w_v}{n\mu\|x_u-x_v\|^{d}}\right)^{1/T}, 1 \right\},
	\end{equation}
        where $\|\cdot\|$ denotes the maximum norm on the torus, $\mu$ is a parameter controlling the average degree, and $0 < T < 1$ is the \emph{temperature} and controls the influence of the geometry. We say that $T=0$ is the \emph{threshold} case of the GIRG. That is, when $T=0$,
	\begin{equation} \label{eq:edgeprobthreshold}
		p_{uv}  =\begin{cases}
			1 & \frac{w_u w_v}{n\mu\|x_u-x_v\|^{d}}\geq 1 \\
			0 &\text{else}.
		\end{cases}
\end{equation}
In general, we will be interested in results for the GIRG model when the number of nodes, $n$, tends to infinity. We will then often refer to the family of GIRGs generated for varying $n$ by $G^{(n)}$, where we assume that all other parameters ($\mu,\ \tau,\ \gamma,\  d$) remain fixed.

In the following, we first give a lower bound for the threshold case
(Section~\ref{sec:girg-threshold-case}).  The proof makes use of the
toroidal structure of the ground space.  To prove that this is not
essential to obtain a super-polynomial number of maximal cliques, we
additionally give a lower bound for a variant of the model where the
ground space is a 2-dimensional unit square with Euclidean norm
(Section~\ref{sec:girg-with-2d-square}).  Finally, in
Section~\ref{sec:girg-non-threshold-case}, we show how to extend these
results to the general case with non-zero temperatures.

	
\subsection{Threshold Case}
\label{sec:girg-threshold-case}

Here we show that a $d$-dimensional threshold GIRG $G = (V, E)$ has,
with high probability, a super-polynomial number of maximal cliques.
To achieve this, we proceed as follows to show that $G$ has a large
co-matching as induced subgraph (also see
Lemma~\ref{lem:co-matching-nr-cliques}).  We consider the vertex set
$S \subseteq V$ containing all vertices whose weight lies between a
lower bound $w_\ell$ and an upper bound $w_u$.  As a co-matching is
quite dense, it makes sense to think of these as rather large weights.
We then define disjoint regions $B_1, \dots, B_{2k}$.  For
$i \in [k]$, we call $B_i$ and $B_{i + k}$ a pair of \emph{opposite}
regions.  These regions will satisfy the following three properties.
First, every $B_i$ contains a vertex from $S$ with high probability.
Secondly, pairs of vertices from $S$ in opposite regions are not
connected.  And thirdly, vertices from $S$ that do not lie in opposite
regions are connected.  Note that these properties imply the existence
of a co-matching on $2k$ vertices, as choosing an arbitrary vertex of
$S$ for each region $B_i$ makes it so that each chosen vertex has
exactly one partner from the opposite region to which it is not
connected, while it is connected to the vertices from all other
regions.
	
	\begin{figure}
		\centering
		\hfill
		\begin{subfigure}{0.48\linewidth}
			\centering
			\includegraphics[page=1]{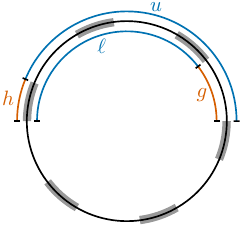}
			\caption{$d = 1$}
			\label{fig:girg-torus-1d}
		\end{subfigure}
		\hfill
		\begin{subfigure}{0.48\linewidth}
			\centering
			\includegraphics[page=2]{girg-torus}
			\caption{$d = 2$}
			\label{fig:girg-torus-2d}
		\end{subfigure}
		\hfill
		\caption{Illustration of the gray shaded boxes $B_i$ on the 1 and 2-dimensional torus.}
		\label{fig:girg-torus}
	\end{figure}
	
	In the following we first give a parameterized definition of the
	regions $B_i$ and then show how to choose the parameters for the above
	strategy to work; also see Figure~\ref{fig:girg-torus}.  Each $B_i$ is
	an axis-aligned box, i.e., the cross product of intervals.  Let
	$g(n), h(n) > 0$ such that $1 / (g(n) + h(n))$ is an even number.  Think of $h(n)$ of
	as the \emph{height} of each box and of $g(n)$ as the \emph{gap} between
	the boxes, yielding $2k = 1 / (g(n) + h(n))$ boxes.  Now we define
	$B_i = [(i - 1) \cdot (g(n) + h(n)), (i - 1) \cdot (g(n) + h(n)) + h(n)] \times [0,
	\frac{1}{2} - g(n)]^{d - 1}$ for $i \in [2k]$.  We call the resulting
	regions $B_1, \dots, B_{2k}$ the \emph{evenly spaced boxes of height
		$h(n)$ and gap $g(n)$}, see Figure~\ref{fig:girg-torus} for an illustration for $d=1$ and $d=2$.  As before, $B_i$ and $B_{i + k}$ for $i \in [k]$
	are \emph{opposite} boxes.
	
	With this, note that the distance between any pair of points in
	opposite boxes is at least $u = \frac{1}{2} - h(n)$ (recall that we
	assume the infinity norm).  Moreover, the distance between any pair of
	points in non-opposite regions is at most $\ell = \frac{1}{2} - g(n)$.
	This yields the following lemma.
	
	\begin{lemma}
		\label{lem:evenly-spaced-boxes-co-matching}
		Let $B_1, \dots, B_{2k}$ be evenly spaced boxes of height $h(n)$ and
		gap $g(n)$ in $\mathbb T^d$.  Let
		$w_\ell = (\frac{1}{2} - g(n))^{d / 2} \sqrt{\mu n}$ and
		$w_u = (\frac{1}{2} - h(n))^{d / 2} \sqrt{\mu n}$.  If a GIRG on $n$ vertices with $T=0$, $\tau\in(2,3)$ and $\mu$ places one
		vertex of weight in $[w_\ell, w_u)$ in each box $B_i$, then these
		vertices form a co-matching.
	\end{lemma}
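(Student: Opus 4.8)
The plan is to verify directly that the placed vertices satisfy the two adjacency properties needed for a co-matching: vertices in non-opposite boxes are adjacent, while vertices in opposite boxes are non-adjacent. Since each chosen vertex then has exactly one non-neighbor (its opposite partner) among the selected set, the induced subgraph on the $2k$ chosen vertices is the complement of a perfect matching, i.e., a co-matching in the sense of Lemma~\ref{lem:co-matching-nr-cliques}.

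The key observation is the form that the threshold connection rule~\eqref{eq:edgeprobthreshold} takes. For $T = 0$, two vertices $u$ and $v$ are adjacent precisely when $w_u w_v \geq n\mu \|x_u - x_v\|^d$, equivalently when $\|x_u - x_v\| \leq (w_u w_v / (n\mu))^{1/d}$. So adjacency reduces entirely to comparing the geometric distance against a threshold determined by the two weights. First I would invoke the two distance bounds already recorded before the lemma: in the infinity norm, points in opposite boxes are at distance at least $u = \tfrac12 - h(n)$, and points in non-opposite boxes are at distance at most $\ell = \tfrac12 - g(n)$.

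For the non-opposite case, let $v_i$ and $v_j$ be chosen vertices in non-opposite boxes, with weights $w_i, w_j \in [w_\ell, w_u)$. Since both weights are at least $w_\ell = (\tfrac12 - g(n))^{d/2}\sqrt{\mu n}$, their product satisfies $w_i w_j \geq w_\ell^2 = (\tfrac12 - g(n))^d \mu n$, so the adjacency threshold obeys $(w_i w_j/(n\mu))^{1/d} \geq \tfrac12 - g(n) = \ell$. As the distance between the two vertices is at most $\ell$, the adjacency inequality holds and they are connected. For the opposite case, let $v_i$ and $v_{i+k}$ be the chosen vertices in a pair of opposite boxes. Since both weights are strictly below $w_u = (\tfrac12 - h(n))^{d/2}\sqrt{\mu n}$, we get $w_i w_{i+k} < w_u^2 = (\tfrac12 - h(n))^d \mu n$, so the threshold satisfies $(w_i w_{i+k}/(n\mu))^{1/d} < \tfrac12 - h(n) = u$. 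As their distance is at least $u$, the adjacency inequality fails and they are non-adjacent. Combining the two cases yields the claimed co-matching.

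The argument is essentially bookkeeping once the connection rule is rewritten as a distance comparison; the only points demanding care are (i) choosing the half-open weight window $[w_\ell, w_u)$ so that the opposite-box comparison is \emph{strict}, which is what guarantees genuine non-adjacency rather than a boundary tie, and (ii) squaring the weight bounds so that the prefactors $(\tfrac12 - g(n))^{d/2}$ and $(\tfrac12 - h(n))^{d/2}$ align exactly with the $d$-th-root thresholds. I expect no real obstacle beyond matching these two endpoints to the corresponding distance bounds $\ell$ and $u$.
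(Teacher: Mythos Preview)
Your proposal is correct and follows essentially the same argument as the paper's own proof: both use the precomputed distance bounds $u = \tfrac12 - h(n)$ and $\ell = \tfrac12 - g(n)$ together with the weight bounds $w_\ell, w_u$ to check the threshold connection rule~\eqref{eq:edgeprobthreshold} in each case, noting that the half-open interval at $w_u$ is what makes the opposite-box comparison strict. The only cosmetic difference is that the paper verifies the ratio $w^2/(\mu n\,\|x_i-x_j\|^d)$ against $1$ directly, whereas you equivalently rewrite this as a comparison of $\|x_i-x_j\|$ against the threshold radius $(w_iw_j/(n\mu))^{1/d}$.
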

	\begin{proof}
		As observed above, the vertices in opposite boxes have distance at
		least $u = \frac{1}{2} - h(n)$.  Moreover, the vertices considered here
		have weight less than $w_u = u^{d / 2} \sqrt{\mu n}$.  As
		$w_u^2 / (\mu n u^d) = 1$, these vertices are not connected because the weight interval $[w_\ell, w_u)$ is open at $w_u$ (see
		Equation~\eqref{eq:edgeprobthreshold}).  Similarly, vertices in
		non-opposite boxes have distance at most $\ell = \frac{1}{2} - g(n)$
		and weight at least $w_\ell = \ell^{d / 2} \sqrt{\mu n}$.  As
		$w_\ell^2 / (\mu n \ell^d) = 1$, such vertices are connected.
		Hence, we get a co-matching.
	\end{proof}
	
	It now remains to choose $g(n)$ and $h(n)$ appropriately.  First observe
	that, for the weight range in
	Lemma~\ref{lem:evenly-spaced-boxes-co-matching} to be non-empty, we
	need $w_\ell < w_u$ and thus $g > h$.  Beyond that, we want to achieve
	the following three goals.  First, the weight range needs to be
	sufficiently large such that we actually have a sufficient number of
	vertices in this range.  For this, we want to choose $g(n)$ substantially
	larger than $h(n)$. Secondly, we want to make each box $B_i$
	sufficiently large for it to contain a vertex with high probability.
	For this, we mainly want $h(n)$ to be large. Thirdly, we want the number
	of boxes $2k = 1 / (g(n) + h(n))$ to be large to obtain a large co-matching.
	For this, we want $g(n)$ and $h(n)$ to be small.
	
	Note that the restrictions of choosing $h(n)$ large, $g(n)$ larger than
	$h(n)$, and $g(n) + h(n)$ small are obviously conflicting.  In the following,
	we show how to balance these goals out to obtain a co-matching of
	polynomial size.  We start by estimating the number of vertices in the
	given weight range in the following lemma, which is slightly more
	general then we need.
	
	\begin{lemma}
		\label{lem:nr-vertices-in-weight-range}
		Let the vertex weights independently be sampled as in~\eqref{eq:pl}, with $\tau\in(2,3)$. Let $a, b > 0$ be constants and let $g(n), h(n)$ be functions of $n$ such
		that $g(n), h(n) \in o(1)$.  Let $S$ be the set of vertices with weight in
		$[(a - g(n))^b \sqrt{\mu n}, (a - h(n))^b \sqrt{\mu n})$.  Then
		\begin{equation}
			\Exp{|S|} =
			n^{\frac{3 - \tau}{2}} \cdot
			\mu^{\frac{1 - \tau}{2}} ba^{b(1 - \tau) - 1} \cdot
			(g(n) - h(n) \pm O(g(n)^2 + h(n)^2)).
		\end{equation}
	\end{lemma}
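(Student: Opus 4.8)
The plan is to reduce the statement to a one-vertex computation by linearity of expectation, and then to extract the leading order by a first-order Taylor expansion. Write $w_\ell = (a - g(n))^b \sqrt{\mu n}$ and $w_u = (a - h(n))^b \sqrt{\mu n}$ for the endpoints of the weight window, so that $S$ is exactly the set of vertices with weight in $[w_\ell, w_u)$. Since the $n$ weights are i.i.d.\ copies of $W$, each indicator $\ind{w_\ell \le W < w_u}$ has the same expectation, and hence $\Exp{|S|} = n\,\Prob{w_\ell \le W < w_u}$. Because $W$ is continuous, its tail~\eqref{eq:pl} gives $\Prob{w_\ell \le W < w_u} = \Prob{W > w_\ell} - \Prob{W > w_u} = w_\ell^{1-\tau} - w_u^{1-\tau}$, which is the only probabilistic input the proof needs.

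Substituting the endpoints and pulling out the common factor $(\mu n)^{(1-\tau)/2}$, I would write
\[
  \Exp{|S|} = n\,(\mu n)^{\frac{1-\tau}{2}}\Big[(a - g(n))^{b(1-\tau)} - (a - h(n))^{b(1-\tau)}\Big].
\]
The scalar prefactor simplifies to $n^{(3-\tau)/2}\mu^{(1-\tau)/2}$, since $1 + (1-\tau)/2 = (3-\tau)/2$; this already reproduces the first two factors of the claim. It then remains only to expand the bracketed difference of two powers whose bases differ by the small perturbations $g(n)$ and $h(n)$.

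For this last step I would Taylor-expand the smooth function $\phi(t) = (a - t)^{b(1-\tau)}$ about $t = 0$. Its derivative is $\phi'(0) = -b(1-\tau)\,a^{b(1-\tau)-1}$, which contributes precisely the power $a^{b(1-\tau)-1}$ together with a constant linear in $b$, so that $\phi(g(n)) - \phi(h(n))$ equals this coefficient times $g(n) - h(n)$ plus a second-order remainder; multiplying by the prefactor then reproduces the leading term in the claim together with the $O(g(n)^2 + h(n)^2)$ correction. The only point requiring care is that this remainder is genuinely $O(g(n)^2 + h(n)^2)$ with a constant independent of $n$. This needs the base $a - t$ to stay bounded away from $0$, which holds because $a > 0$ is a fixed constant and $g(n), h(n) \in o(1)$, so $a - g(n), a - h(n) \ge a/2$ for all large $n$ and $\phi''$ is uniformly bounded on the relevant interval. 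Taylor's theorem with Lagrange remainder then furnishes the uniform quadratic bound, and collecting the three pieces completes the argument.
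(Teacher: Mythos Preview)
Your proposal is correct and follows essentially the same route as the paper: compute $\Exp{|S|}$ via the tail $\Prob{W>x}=x^{1-\tau}$, factor out $(\mu n)^{(1-\tau)/2}$ to obtain $n^{(3-\tau)/2}\mu^{(1-\tau)/2}\big[(a-g(n))^{b(1-\tau)}-(a-h(n))^{b(1-\tau)}\big]$, and then Taylor-expand $t\mapsto (a-t)^{b(1-\tau)}$ at $0$. Your explicit justification that $a-t$ stays bounded away from $0$ (so $\phi''$ is uniformly bounded and the remainder is genuinely $O(g(n)^2+h(n)^2)$) is a nice addition the paper leaves implicit.
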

	\begin{proof}
		Recall from~\eqref{eq:pl}
		that the
		cumulative distribution function for the weights is
		$F(x)  =  1 - x^{1 - \tau}$.
		Thus, we get
		\begin{align}
			\Exp{|S|}
			\notag
			&= n \cdot \left(
			F\left((a - h(n))^b \sqrt{\mu n}\right) -
			F\left((a - g(n))^b \sqrt{\mu n}\right)\right)\\
			\notag
			&= n \cdot \left(
			((a - g(n))^b \sqrt{\mu n})^{1 -\tau}  - 
			((a - h(n))^b \sqrt{\mu n})^{1 - \tau} \right) \\
			\label{eq:nr-vertices-in-range-1}
			&= \mu^{\frac{1 - \tau}{2}}
			n^{\frac{3 - \tau}{2}}
			\left(
			(a - g(n))^{b(1 - \tau)} - (a - h(n))^{b(1 - \tau)}
			\right).
		\end{align}
		We can now use the Taylor expansion of $f(x) = (a - x)^c$ at $0$ to
		obtain the bound $f(x) = a^c - c a^{c - 1} x \pm O(x^2)$, which is
		valid for $x \in o(1)$.  Since $g(n), h(n) \in o(1)$ we can thus bound the
		above term in parentheses for $c = b(1 - \tau)$ as
		\begin{align}
			(a - g(n))^c - (a - h(n))^c
			\notag
			&=
			- c a^{c - 1} g(n) + c a^{c - 1} h \pm O(g(n)^2 + h(n)^2)\\
			\notag
			&= - c a^{c - 1}(g(n) - h(n) \pm O(g(n)^2 + h(n)^2))\\
			\label{eq:nr-vertices-in-range-2}
			&= b(\tau - 1)a^{b(1 - \tau) - 1} (g(n) - h(n) \pm O(g(n)^2 + h(n)^2)).
		\end{align}
		Equations~\eqref{eq:nr-vertices-in-range-1}
		and~\eqref{eq:nr-vertices-in-range-2} together yield the 
		claim.
	\end{proof}
	
	Note that, if additionally $h(n) \in o(g(n))$, we can write the last factor
	as $g(n)(1 - h(n)/g(n) \pm O(g(n))) = g(n)(1 \pm o(1))$ and obtain the
	following corollary.

	
	\begin{corollary}\label{col:nr-vertices-in-weight-range-dominant}
		Let $a, b > 0$ be constants and let $g(n), h(n)$ be functions of $n$ such
		that $g(n) \in o(1)$ and $h(n) \in o(g(n))$.  Let $S$ be the set of vertices
		with weight in $[(a - g(n))^b \sqrt{\mu n}, (a - h(n))^b \sqrt{\mu n})$.
		Then
		\begin{equation}\label{eq:expS}
			\Exp{|S|} =
			g(n) n^{\frac{3 - \tau}{2}} \cdot
			\mu^{\frac{1 - \tau}{2}} ba^{b(1 - \tau) - 1} \cdot
			(1 \pm o(1)).
		\end{equation}
	\end{corollary}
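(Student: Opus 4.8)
The plan is to derive Corollary~\ref{col:nr-vertices-in-weight-range-dominant} directly from Lemma~\ref{lem:nr-vertices-in-weight-range} by simplifying the final factor under the strengthened hypothesis. First I would note that the hypotheses of the lemma are still met: the corollary assumes $g(n) \in o(1)$ and $h(n) \in o(g(n))$, and the latter in particular gives $h(n) \in o(g(n)) \subseteq o(1)$, so that Lemma~\ref{lem:nr-vertices-in-weight-range} applies and yields
\[
  \Exp{|S|} = n^{\frac{3-\tau}{2}} \mu^{\frac{1-\tau}{2}} b a^{b(1-\tau)-1} \cdot \left( g(n) - h(n) \pm O(g(n)^2 + h(n)^2) \right).
\]
The whole task is then to show that the last factor equals $g(n)(1 \pm o(1))$, since all the remaining factors already match the claimed expression.

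The key step is to factor $g(n)$ out of the last factor, which gives
\[
  g(n) - h(n) \pm O(g(n)^2 + h(n)^2) = g(n)\left( 1 - \frac{h(n)}{g(n)} \pm O\!\left( g(n) + \frac{h(n)^2}{g(n)} \right) \right).
\]
I would then verify that each of the three correction terms inside the parentheses is $o(1)$. By the added hypothesis $h(n) \in o(g(n))$ we have $h(n)/g(n) \in o(1)$; the term $g(n)$ is $o(1)$ by assumption; and for the last term I would write $h(n)^2/g(n) = h(n)\cdot\big(h(n)/g(n)\big) \le h(n)$ once $h(n) < g(n)$ (which holds for all large $n$), so that $h(n)^2/g(n) \le h(n) \in o(1)$ as well. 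Hence the bracketed expression is $1 \pm o(1)$.

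Substituting $g(n) - h(n) \pm O(g(n)^2 + h(n)^2) = g(n)(1 \pm o(1))$ back into the lemma's expression immediately produces
\[
  \Exp{|S|} = g(n)\, n^{\frac{3-\tau}{2}} \cdot \mu^{\frac{1-\tau}{2}} b a^{b(1-\tau)-1} \cdot (1 \pm o(1)),
\]
which is exactly the claim. I do not expect any genuine obstacle here: the argument is a one-line asymptotic simplification, and the only point requiring any care is confirming that all three error contributions genuinely vanish under the stronger assumption $h(n) \in o(g(n))$, which is the routine check carried out above.
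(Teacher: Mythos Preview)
Your proposal is correct and follows essentially the same approach as the paper: both apply Lemma~\ref{lem:nr-vertices-in-weight-range}, factor out $g(n)$ from the final term, and verify that the remaining bracket is $1 \pm o(1)$. The only cosmetic difference is that the paper absorbs $h(n)^2/g(n)$ into $O(g(n))$ immediately (since $h(n)\in o(g(n))$ gives $h(n)^2 \in o(g(n)^2)$), whereas you bound it separately by $h(n)$; either way the conclusion is the same.
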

	
	Consider again the weights $w_\ell$ and $w_u$ as given in
	Lemma~\ref{lem:evenly-spaced-boxes-co-matching} and let $S$ be the set
	of vertices in $[w_\ell, w_u)$.  Then
	Corollary~\ref{col:nr-vertices-in-weight-range-dominant} in particular
	implies that $S$ contains $\Theta(g(n) \cdot n^{\frac{3 - \tau}{2}})$
	vertices in expectation.
	
	With this, we turn to our second goal mentioned above, namely that
	each box $B_i$ should be sufficiently large.
	
	\begin{lemma}
		\label{lem:volume-of-boxes}
		Let $B_1, \dots, B_{2k}$ be evenly spaced boxes of height $h(n)$ and
		gap $g(n)$ in $\mathbb T^d$.  If $g(n) \in o(1)$ then each box $B_i$ has
		volume $h / 2^{d - 1} \cdot (1 - o(1))$.
	\end{lemma}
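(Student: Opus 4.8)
The plan is to compute the volume directly from the definition of each box as a Cartesian product of intervals, and then to control its lower-order dependence on $g(n)$ using that $d$ is a fixed constant.

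First I would recall that $B_i$ is the Cartesian product of one interval of length $h(n)$ in the first coordinate (running from $(i-1)(g(n)+h(n))$ to $(i-1)(g(n)+h(n)) + h(n)$) together with $d-1$ intervals each equal to $[0, \tfrac{1}{2} - g(n)]$, hence each of length $\tfrac{1}{2} - g(n)$. Since the Lebesgue measure of an axis-aligned box on the torus is the product of its side lengths, the volume is immediately
\[
\mathrm{vol}(B_i) = h(n) \cdot \left(\tfrac{1}{2} - g(n)\right)^{d-1}.
\]

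Next I would factor $\tfrac{1}{2}$ out of the $(d-1)$-fold power, writing $\left(\tfrac{1}{2} - g(n)\right)^{d-1} = 2^{-(d-1)}\left(1 - 2g(n)\right)^{d-1}$, so that the volume becomes $\tfrac{h(n)}{2^{d-1}}\left(1 - 2g(n)\right)^{d-1}$. It then remains to argue that $\left(1 - 2g(n)\right)^{d-1} = 1 - o(1)$. Since $d$ is fixed, a binomial expansion (or the same Taylor bound already invoked in Lemma~\ref{lem:nr-vertices-in-weight-range}) gives $\left(1 - 2g(n)\right)^{d-1} = 1 - 2(d-1)g(n) \pm O(g(n)^2)$, and the assumption $g(n) \in o(1)$ forces this to be $1 - o(1)$. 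Substituting yields $\mathrm{vol}(B_i) = \tfrac{h}{2^{d-1}}(1 - o(1))$, as claimed.

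This lemma is purely a volume computation, and I do not anticipate any genuine obstacle. The only point deserving a moment of care is that the $o(1)$ and $O(g(n)^2)$ error terms hide constants depending on $d$, which is harmless precisely because $d$ is held fixed as $n \to \infty$ (as stated for the family $G^{(n)}$). Were $d$ allowed to grow with $n$, the factor $\left(1 - 2g(n)\right)^{d-1}$ would no longer be $1 - o(1)$ in general; but under the standing assumption of constant $d$ the estimate is immediate.
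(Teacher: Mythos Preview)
Your proof is correct and follows essentially the same approach as the paper: compute the volume as $h(n)\cdot(\tfrac{1}{2}-g(n))^{d-1}$, factor out $2^{-(d-1)}$, and use that $(1-2g(n))^{d-1}\to 1$ since $g(n)\in o(1)$ and $d$ is constant. Your additional remarks on the binomial expansion and the role of fixed $d$ are sound but not needed beyond what the paper states.
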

	\begin{proof}
		Recall that the height of $B_i$ is $h(n)$ while its extent in all other
		dimensions is $\ell = \frac{1}{2} - g(n)$.  Thus its volume is
		$h(n) \cdot (\frac{1}{2} - g(n))^{d - 1} = h(n) / 2^{d - 1} \cdot (1 - 2g(n))^{d
			- 1}$.  The claim follows from the fact that $(1 - 2g(n))^{d - 1}$
		approaches $1$ from below for $n \to \infty$ as $g(n) \in o(1)$ and $d$
		constant.
	\end{proof}
	
	Corollary~\ref{col:nr-vertices-in-weight-range-dominant} and
	Lemma~\ref{lem:volume-of-boxes} together tell us that the expected
	number of vertices in each box that have a weight in the desired range
	is in $\Theta(h(n) \cdot g(n) \cdot n^{\frac{3 - \tau}{2}})$.  Recall we
	want to choose $h(n)$ and $g(n)$ as small as possible such that each box
	still contains a vertex with high probability.  We set
	$h(n) = c\cdot n^{-\frac{3 - \tau}{4}}$ and
	$g(n) = c\cdot n^{-\frac{3 - \tau}{4} + \varepsilon}$ for arbitrary
	constants $c > 0$ and $\varepsilon > 0$.  Note that this satisfies the
	condition $h(n) \in o(g(n))$ of
	Corollary~\ref{col:nr-vertices-in-weight-range-dominant} and yields an
	expected number of $\Theta(n^\varepsilon)$ vertices with the desired
	weight in each box.  Since the number of vertices in a given box
	follows a binomial distribution and since
	$n^{\varepsilon} \in \omega(\log(n))$, we can apply a Chernoff
	bound to conclude that actual number of vertices matches the expected
	value (up to constant factors) with probability $1 - O(n^{-c'})$ for
	any $c' > 0$~\cite[Corollaries 2.3 and 2.4]{bff-espsf-22}.  Together
	with a union bound, it follows that every box contains
	$\Theta(n^{\varepsilon})$ vertices (and thus at least one vertex) with
	probability $1 - O(2k \cdot n^{-c'})$.  By choosing $g(n), h(n)$, and $k$
	appropriately, we obtain the following theorem. 
	
	
	\begin{theorem}
		\label{thm:girg-torus-co-matching}
		Let $G^{(n)}$ be a $d$-dimensional GIRG with $T=0$, $\mu>0$ and $\tau\in(2,3)$ and let $s > 0$  and $\varepsilon > 0$ be
		arbitrary constants. Then, with high probability, $G^{(n)}$ contains a
		co-matching of size $s \cdot n^{\frac{3 - \tau}{4} - \varepsilon}$
		as induced subgraph.
	\end{theorem}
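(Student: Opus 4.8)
The plan is to assemble the machinery developed above; essentially all the ingredients are in place. Lemma~\ref{lem:evenly-spaced-boxes-co-matching} tells us that selecting one vertex of weight in $[w_\ell, w_u)$ from each of the $2k$ evenly spaced boxes yields a co-matching, so it only remains to choose the parameters $g(n)$ and $h(n)$ so that (i) the number of boxes $2k = 1/(g(n)+h(n))$ is a large even integer of order $n^{(3-\tau)/4 - \varepsilon}$, and (ii) every box contains at least one vertex of the required weight with high probability. I would therefore start from the concrete choice $h(n) = c\, n^{-(3-\tau)/4}$ and $g(n) = c\, n^{-(3-\tau)/4 + \varepsilon}$ suggested in the preceding discussion, where $c>0$ is a constant to be fixed at the very end.

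First, I would handle the integrality constraint that $1/(g(n)+h(n))$ be an even integer. Since $g(n)+h(n) = \Theta(n^{-(3-\tau)/4 + \varepsilon})$, the quantity $1/(g(n)+h(n))$ is of order $n^{(3-\tau)/4-\varepsilon}$; I let $2k$ be the largest even integer not exceeding it and then redefine the gap as $\tilde g(n) := 1/(2k) - h(n)$. This perturbs $g(n)$ by only a $(1+o(1))$ factor, so $\tilde g(n) \in o(1)$, $h(n) \in o(\tilde g(n))$, and $\tilde g(n) > h(n)$ for large $n$ (making the weight window admissible); hence all hypotheses of Corollary~\ref{col:nr-vertices-in-weight-range-dominant} and Lemma~\ref{lem:volume-of-boxes} remain satisfied with $\tilde g(n)$ in place of $g(n)$.

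Next I would verify property (ii). Combining Corollary~\ref{col:nr-vertices-in-weight-range-dominant} (the count of vertices with weight in $[w_\ell, w_u)$, applied with $a = \tfrac12$ and $b = d/2$) with Lemma~\ref{lem:volume-of-boxes} (the volume of each box) and the independence of positions and weights, the expected number of suitable vertices per box is $\Theta(h(n)\,\tilde g(n)\, n^{(3-\tau)/2}) = \Theta(c^2 n^{\varepsilon})$. Because this count is a binomial random variable with mean in $\omega(\log n)$, a Chernoff bound shows that any fixed box fails to contain such a vertex with probability at most $O(n^{-c'})$ for any constant $c'$; taking $c'$ larger than the exponent of $2k = O(n^{(3-\tau)/4})$ and applying a union bound over all boxes shows that, with high probability, every box contains a suitable vertex. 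Selecting one such vertex per box and invoking Lemma~\ref{lem:evenly-spaced-boxes-co-matching} then produces a co-matching on $2k$ vertices. Finally, since $2k = \tfrac{1}{c} n^{(3-\tau)/4 - \varepsilon}(1 - o(1))$, I would fix $c$ small enough (any $c \le 1/(2s)$ works for $n$ large) to guarantee $2k \ge s\, n^{(3-\tau)/4-\varepsilon}$, which gives the claimed co-matching size.

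The genuinely delicate step is not any single lemma but the simultaneous bookkeeping in the parameter choice: one must keep $h(n)$ small enough that the boxes remain nonempty, keep $\tilde g(n)$ dominant so that Corollary~\ref{col:nr-vertices-in-weight-range-dominant} applies, respect the even-integer constraint on the number of boxes, and still land on the exact exponent $(3-\tau)/4-\varepsilon$ with a leading constant at least $s$. The concentration-plus-union-bound argument is the other essential ingredient, and it succeeds precisely because the per-box mean $\Theta(n^{\varepsilon})$ dominates $\log n$ while the number of boxes is only polynomial, so the failure probabilities sum to $o(1)$.
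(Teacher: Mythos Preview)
Your proposal is correct and follows essentially the same approach as the paper: the identical choices $h(n)=c\,n^{-(3-\tau)/4}$ and $g(n)=c\,n^{-(3-\tau)/4+\varepsilon}$, the combination of Corollary~\ref{col:nr-vertices-in-weight-range-dominant} and Lemma~\ref{lem:volume-of-boxes} to get $\Theta(n^{\varepsilon})$ expected suitable vertices per box, the Chernoff-plus-union-bound step, and the final tuning of $c$ to hit the target size $s\,n^{(3-\tau)/4-\varepsilon}$. If anything, you are slightly more explicit than the paper about the even-integer constraint on $2k$, which the paper absorbs into the choice of $c$ without comment.
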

	\begin{proof}
		Let $B_1, \dots, B_{2k}$ be evenly spaced boxes of height
		$h(n) = c\cdot n^{-\frac{3 - \tau}{4}}$ and gap
		$g(n) = c\cdot n^{-\frac{3 - \tau}{4} + \varepsilon}$ (for
		appropriately chosen $c > 0$, which will be determined later).  Let
		$w_\ell$ and $w_u$ be defined as in
		Lemma~\ref{lem:evenly-spaced-boxes-co-matching}.  As argued above,
		Corollary~\ref{col:nr-vertices-in-weight-range-dominant} and
		Lemma~\ref{lem:volume-of-boxes} imply that, with high probability,
		each box $B_i$ includes at least one vertex with weight in
		$[w_\ell, w_u)$.  By
		Lemma~\ref{lem:evenly-spaced-boxes-co-matching} any set that contains exactly one vertex of each box
		forms a co-matching of size $2k$.
		
		Recall that $2k = 1 / (g(n) + h(n))$.  Thus, we can choose $c$ such that
		$2k = s \cdot n^{\frac{3 - \tau}{4} - \varepsilon}$.  Again, by the
		above argumentation, it follows that every box contains at least one
		vertex with probability
		$1 - O(2k n^{-c'}) = 1 - O(n^{\frac{3 - \tau}{4} - \varepsilon -
			c'})$ for any constant $c' > 0$.  Choosing $c'$ sufficiently large
		then yields the claim.
	\end{proof}
	
	This theorem together with Lemma~\ref{lem:co-matching-nr-cliques}
	directly imply the following corollary.
	
	\begin{corollary}\label{cor:maxcliquesgirg}
		Let $G^{(n)}$ be a $d$-dimensional GIRG with $T = 0$, $\mu>0$ and $\tau\in(2,3)$, and let $b > 0$ and $\varepsilon > 0$ be arbitrary
		constants.  Then, with high probability, the number of maximal
		cliques in $G^{(n)}$ is at least $b^{n^{(3 - \tau)/4 - \varepsilon}}$.
	\end{corollary}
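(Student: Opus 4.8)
The plan is to read the corollary as an essentially immediate consequence of Theorem~\ref{thm:girg-torus-co-matching} and Lemma~\ref{lem:co-matching-nr-cliques}, with one small but genuine gap to fill. First I would invoke Theorem~\ref{thm:girg-torus-co-matching} to obtain, with high probability, an induced co-matching $H$ in $G^{(n)}$ on $2k$ vertices, where $2k = s \cdot n^{(3 - \tau)/4 - \varepsilon}$ for a constant $s > 0$ that I leave free for the moment. By Lemma~\ref{lem:co-matching-nr-cliques}, the graph $H$ on its own has $2^{k}$ maximal cliques.

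The point that needs care is that a clique of $H$ which is maximal \emph{within $H$} need not be maximal in the ambient graph $G^{(n)}$, so $2^{k}$ is not instantly a lower bound on the number of maximal cliques of $G^{(n)}$. I would bridge this with a standard extension argument. Let $\mathcal{C}$ be a maximal clique of $H$ and extend it greedily to some maximal clique $\mathcal{C}'$ of $G^{(n)}$, so that $\mathcal{C} \subseteq \mathcal{C}'$. Since $H$ is an \emph{induced} subgraph, the set $\mathcal{C}' \cap V(H)$ is again a clique of $H$ and it contains $\mathcal{C}$; maximality of $\mathcal{C}$ in $H$ then forces $\mathcal{C}' \cap V(H) = \mathcal{C}$. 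Hence $\mathcal{C}$ can be recovered from its extension as $\mathcal{C} = \mathcal{C}' \cap V(H)$, which means that distinct maximal cliques of $H$ are sent to distinct maximal cliques of $G^{(n)}$. This injection shows that $G^{(n)}$ has at least $2^{k}$ maximal cliques whenever the co-matching is present, which happens with high probability.

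It then remains only to match the exponent. Writing $k = \tfrac{s}{2}\, n^{(3 - \tau)/4 - \varepsilon}$, the count becomes $2^{k} = (2^{s/2})^{\, n^{(3 - \tau)/4 - \varepsilon}}$, so choosing $s = 2 \log_2 b$ yields exactly $2^{k} = b^{\, n^{(3 - \tau)/4 - \varepsilon}}$. This handles the interesting case $b > 1$; for $b \le 1$ the claim is trivial, since any nonempty graph has at least one maximal clique and $b^{\, n^{(3 - \tau)/4 - \varepsilon}} \le 1$. Because Theorem~\ref{thm:girg-torus-co-matching} permits $s$ and $\varepsilon$ to be arbitrary positive constants, this choice of $s$ is admissible, and the high-probability guarantee for the co-matching carries over verbatim to the clique count. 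I expect the injectivity argument of the second paragraph to be the only non-routine step; everything else is bookkeeping, and in particular no new probabilistic estimate is required beyond the one already established in Theorem~\ref{thm:girg-torus-co-matching}.
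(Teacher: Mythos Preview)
Your proof is correct and follows the same approach as the paper, which simply records the corollary as an immediate consequence of Theorem~\ref{thm:girg-torus-co-matching} and Lemma~\ref{lem:co-matching-nr-cliques} without further argument. Your injection from maximal cliques of the induced co-matching to maximal cliques of $G^{(n)}$ via $\mathcal{C}' \mapsto \mathcal{C}' \cap V(H)$ is a detail the paper leaves implicit, and you handle it correctly.
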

	
	Figure~\ref{fig:girglb} shows this lower bound for $b=2$ against $n$. Interestingly, while Corollary~\ref{cor:maxcliquesgirg} shows that the number of maximal cliques grows super-polynomially in $n$, for $\tau> 2$, this growth may still be slower than the linear slope $n$ for large geometric networks. This is of particular importance as the smaller order terms of the number of maximal cliques contain terms of at least $\Theta(n)$. Indeed, the number of maximal 2-cliques is lower bounded by the number of vertices of degree 1, which scales linearly by Equation~\eqref{eq:pl}. Thus, for practical purposes, the dominant term could be the linear term instead of the super-polynomial term, especially if the degree exponent is close to 3. 
	
	\begin{figure}
		\centering
		\hfill
		\begin{subfigure}{0.32\linewidth}
			\includegraphics[width=\linewidth]{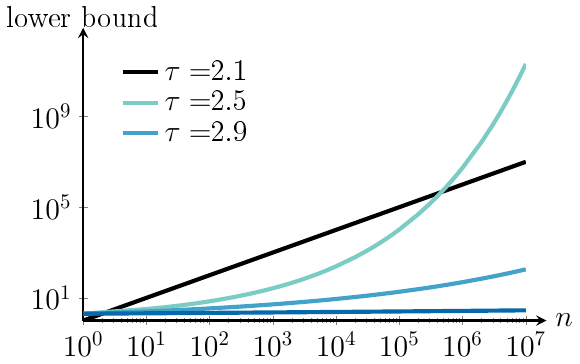}
			\caption{GIRG lower bound}
			\label{fig:girglb}
		\end{subfigure}
		\hfill
		\begin{subfigure}{0.32\linewidth}
			\includegraphics[width=\linewidth]{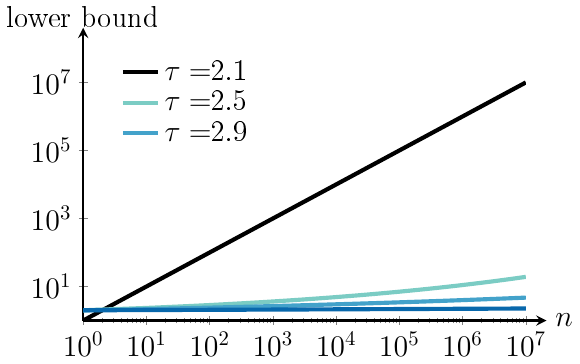}
			\caption{GIRG lower bound, no torus}
			\label{fig:girglbnotorus}
		\end{subfigure}
		\hfill\begin{subfigure}{0.32\linewidth}
			\includegraphics[width=\linewidth]{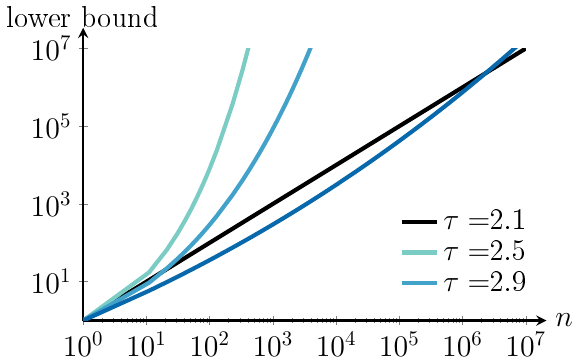}
			\caption{IRG lower bound}
			\label{fig:IRG lower bound}
		\end{subfigure}
		\hfill
		\caption{Lower bound on the number of maximal cliques of Corollary~\ref{cor:maxcliquesgirg} (with $b=2$, $\varepsilon\downarrow 0$), ~\ref{thm:girg_non_torus} (for $\varepsilon\downarrow 0$, $b=2$ and $C=1$), ~\ref{thm:max_cliques_irg} (for $\varepsilon\downarrow 0$ and $b=2$) against $n$ for different values of $\tau$. The black line is the line $n$.}
		\label{fig:plotexponents}
	\end{figure}
	
	\subsection{GIRG with 2-Dimensional Square}
	\label{sec:girg-with-2d-square}

	
	Our previous lower bound for the number of maximal cliques relies on the toroidal structure of the underlying space. We now show that even if the vertex positions are constrained to be positioned in the square $[0,1]^2$ instead, the GIRG still contains a super-polynomial number of maximal cliques. In this setting, we will also switch from the infinity norm to the 2-norm. We will discuss possible extensions to other norms in Section~\ref{sec:conc}.
	
	\begin{theorem}\label{thm:girg_non_torus}
		For any $\varepsilon>0$ and $b>0$, a 2-dimensional GIRG $G^{(n)}$ on $n$ vertices with vertex positions uniformly distributed over $[0,1]^2$ equipped with the 2-norm and $T=0$, $\mu>0$ and $\tau\in(2,3)$ contains with high probability at least 
		\begin{equation}
			Cb^{n^{\frac{3-\tau}{10}-\varepsilon}}
		\end{equation}
		maximal cliques for some $C>0$.
	\end{theorem}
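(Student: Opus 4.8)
The plan is to mimic the co-matching construction of Theorem~\ref{thm:girg-torus-co-matching}, but to replace the evenly spaced boxes on the torus by $2k$ boxes arranged on a circle inscribed in the square $[0,1]^2$. Concretely, I would fix a radius $R = \Theta(1)$ (say $R$ slightly below $1/2$), center a circle at $(1/2,1/2)$, and place box centers at the angles $\theta_i = \pi i / k$ for $i \in \{0,\dots,2k-1\}$, pairing each box $B_i$ with its antipodal box $B_{i+k}$ as the \emph{opposite} pair. Under the $2$-norm the centers of an opposite pair are at the diameter distance $2R$, which is the maximal occurring distance, while the centers of the worst non-opposite pair (the angular neighbors of an opposite pair, central angle $\pi - \pi/k$) are at distance $2R\cos(\pi/(2k))$. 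The crucial quantity is the gap $2R\bigl(1-\cos(\pi/(2k))\bigr) = \Theta(1/k^2)$ between these two distances: it plays the role that $g(n)-h(n)$ played on the torus, and its polynomial smallness in $k$ is exactly what forces a smaller exponent than in the toroidal case.

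Next I would size the boxes so that this gap survives the finite box extent. Since the opposite distance is maximal (the derivative of $2R\sin(\phi/2)$ in the central angle vanishes at $\phi=\pi$), tangential displacements perturb opposite distances only at second order, whereas they perturb the near-opposite distance at first order with slope $\Theta(1/k)$; radial displacements perturb both distances at first order with slope $\Theta(1)$. Balancing these perturbations against the available gap $\Theta(1/k^2)$ forces a box of radial extent $\Theta(1/k^2)$ and tangential extent $\Theta(1/k)$, hence of area $V = \Theta(1/k^3)$. I would then choose the weight window $[w_\ell,w_u)$ with $w_\ell, w_u = \Theta(\sqrt n)$ and $w_u - w_\ell = \Theta(\sqrt n / k^2)$ so that $w_u/\sqrt{n\mu}$ lies just below the minimal opposite distance and $w_\ell/\sqrt{n\mu}$ just above the maximal non-opposite distance. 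Exactly as in Lemma~\ref{lem:evenly-spaced-boxes-co-matching}, picking one vertex of weight in $[w_\ell,w_u)$ from each box then yields a co-matching on $2k$ vertices in the threshold GIRG: opposite vertices fail the connection condition $\|x_u-x_v\| \le \sqrt{w_u w_v/(n\mu)}$ (worst case: both weights near $w_u$), while every other pair satisfies it (worst case: both weights near $w_\ell$).

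It then remains to guarantee with high probability that every box actually contains such a vertex, and this is what fixes $k$. By the weight counting of Lemma~\ref{lem:nr-vertices-in-weight-range}, the number of vertices with weight in a window of relative width $\Theta(1/k^2)$ around $\Theta(\sqrt n)$ is $\Theta(n^{(3-\tau)/2}/k^2)$; multiplying by the box area $V=\Theta(1/k^3)$ gives an expected $\Theta(n^{(3-\tau)/2}/k^5)$ suitable vertices per box. Setting $k = \Theta(n^{(3-\tau)/10-\varepsilon})$ makes this expectation $n^{\Theta(\varepsilon)} = \omega(\log n)$, so a Chernoff bound together with a union bound over the $2k = O(n^{(3-\tau)/10})$ boxes shows every box is nonempty w.h.p. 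Selecting one vertex per box produces a co-matching of size $2k = \Theta(n^{(3-\tau)/10-\varepsilon})$, and Lemma~\ref{lem:co-matching-nr-cliques} turns it into at least $2^{k} \ge C b^{\,n^{(3-\tau)/10-\varepsilon}}$ maximal cliques (adjusting the hidden constant $s$ in the co-matching size and absorbing the base change from $2$ to $b$ into $C$ and $\varepsilon$), as claimed.

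The main obstacle is the geometric step: on the torus every box had a unique opposite at the \emph{global} maximal distance $1/2$, and a constant-order separation from all non-opposite boxes could be arranged, whereas on the square the only clean antipodal pairing is the circular one, which pays a factor $\Theta(1/k^2)$ in the distance gap and forces anisotropically thin boxes of area $\Theta(1/k^3)$. Verifying carefully that the first- and second-order distance perturbations stay within this $\Theta(1/k^2)$ gap for \emph{all} $\binom{2k}{2}$ pairs, not just the extremal opposite and near-opposite ones, and that the boxes remain pairwise disjoint and contained in $[0,1]^2$, is the delicate part; everything downstream (weight counting, Chernoff, union bound, and the co-matching-to-cliques step) is routine given the toroidal analysis.
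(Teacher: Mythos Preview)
Your proposal is correct and follows essentially the same strategy as the paper: place $2k$ regions around a circle inscribed in the square, use antipodal pairing for the non-edges of the co-matching, exploit the $\Theta(1/k^2)$ gap between the opposite and near-opposite chord distances, and balance region area, weight-window width, and $k$ to arrive at $k=\Theta(n^{(3-\tau)/10-\varepsilon})$. The only difference is cosmetic: the paper takes its regions to be circular \emph{segments} (caps) of sagitta $h=\Theta(1/k^2)$ and extracts the segment count via an arc-length/$\arcsin$ computation, whereas you use annular sectors with separately chosen radial extent $\Theta(1/k^2)$ and tangential extent $\Theta(1/k)$ and read off the gap directly from the Taylor expansion of $1-\cos(\pi/(2k))$; the resulting region areas, weight windows, and final exponent coincide.
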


 	\begin{figure}[t]
		\centering
		\begin{subfigure}{0.3\linewidth}
			\includegraphics[width=\linewidth]{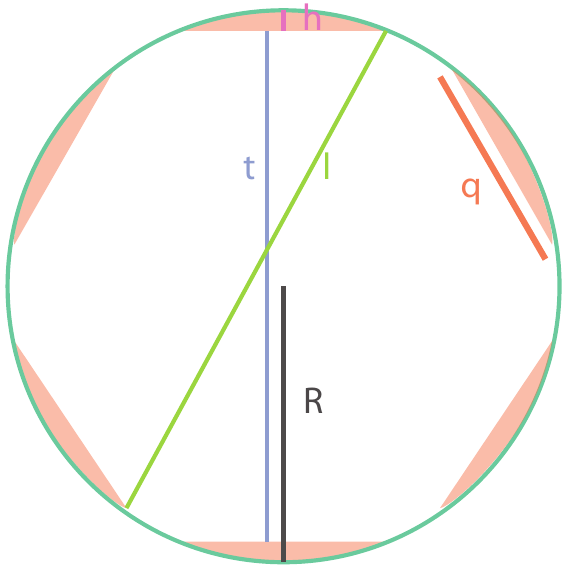}
			\caption{A circle of radius $R$ with several orange areas of height $h(n)$. }
			\label{fig:circle}
		\end{subfigure}
		\hspace{3cm}
		\begin{subfigure}{0.3\linewidth}
			\includegraphics[width=\linewidth]{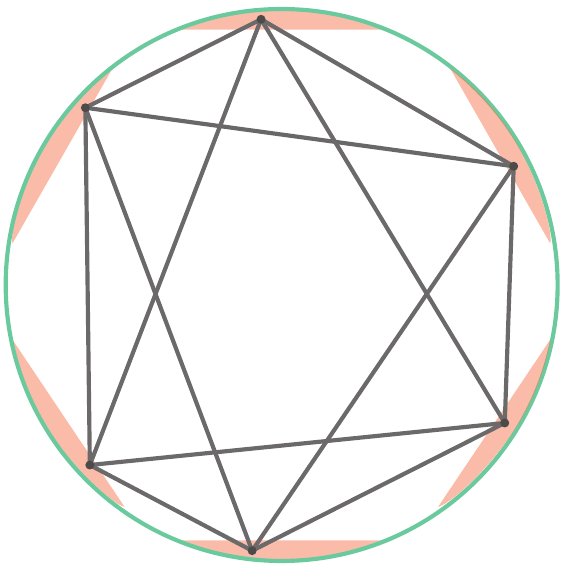}
			\caption{Any set of vertices with one in each orange area form a co-matching. }
			\label{fig:circleclique}
		\end{subfigure}
		\caption{Clique minus a matching in the 2-dimensional GIRG.}
		\label{fig:2dimgirg}
	\end{figure}
	\begin{proof}
		Let $S$ be the set of vertices with weights within $[a\sqrt{\mu n(1-c\cdot n^{-\beta})},a\sqrt{\mu n}]$ for some $0<a<1/4$, $\beta>0$ (and
		appropriately chosen $c > 0$, which will be determined later). By Corollary~\ref{col:nr-vertices-in-weight-range-dominant},
		\begin{equation}
			\Exp{|S|}
			=n^{(3-\tau)/2-\beta}(1+o(1))
		\end{equation}
		Let $\mathcal{C}$ be a circle on $[0,1]^2$ of constant radius $R<1/4$. We now create an even number of areas $B_1,\dots,B_{2k}$ of height $h(n)$, evenly distributed over $\mathcal{C}$ as illustrated in Figure~\ref{fig:circle}. That is, we consider $2k$ identical and evenly spaced circular segments $B_1,\dots, B_{2k}$ of height $h$ and chord length $q$. We ensure that any pair of vertices in two opposite areas $B_i$ and $B_{i+k}$ are disconnected. That is, the distance $t$ between the two ends of these areas should equal
		\begin{equation}
			t=a.
		\end{equation}
		We also ensure that any pair of vertices in non-opposite areas connect. This means that the distance $\ell$ between the rightmost part of $B_i$ and the leftmost part of any non-opposite $B_j$ is at most
		\begin{equation}\label{eq:l2dimnotorus}
			\ell=a\sqrt{1-c \cdot n^{-\beta}}.
		\end{equation}
		The width $q$ of one of the $B_i$'s is given by
		\begin{equation}
			\frac{q}{2}=\sqrt{R^2-(R-h)^2}=\sqrt{h(2R-h)}=\sqrt{h(n)(h(n)+t)}=\sqrt{h(n)(h(n)+a)}.
		\end{equation}
		Thus, the area of $B_i$ is given by
		\begin{equation}
			c_1h(n)^{3/2}\sqrt{h(n)+a},
		\end{equation}
		for some constant $c_1>0$. 
		The probability that a given area contains no vertices from $S$ is given by
		\begin{equation}\label{eq:areaempty}
			\Big(1-c_1h(n)^{3/2}\sqrt{h(n)+a}\Big)^{|S|}= \exp(-c_2n^{(3-\tau)/2-\beta}h(n)^{3/2})(1+o(1)),
		\end{equation}
		for some $c_2>0$.
		
		
		We now calculate the maximal number of areas that we can pack on $\mathcal{C}$. The circumference of $\mathcal{C}$ is $2\pi R$. The arc length of a single area is at most $q\pi/2$. 
		Furthermore, the arc length of the section with a chord of length $\ell$, $a(\ell)$, is given by 
		\begin{align}\label{eq:arc_length_l}
			a(\ell)= 2R\sin^{-1}(\ell/(2R)) & = 2R\sin^{-1}\Big(\frac{a\sqrt{1-c \cdot n^{-\beta}}}{a+2h(n)}\Big)
			\nonumber\\
			& = 2R\sin^{-1}\Big(1-\frac{a+2h(n)-a\sqrt{1-c\cdot n^{-\beta}}}{a+2h(n)}\Big)\nonumber\\
			& = \pi R- 2^{3/2}R\sqrt{\frac{a+2h(n)-a\sqrt{1-c\cdot n^{-\beta}}}{a+2h(n)}} (1+o(1)),
		\end{align}
		using the Taylor series of $\sin^{-1}(1-x)$ around $x=0$. Now take $h(n)=c\cdot n^{-\gamma(n)}$. Then, by Equation~\eqref{eq:arc_length_l}, $a(\ell)$ scales as
		\begin{align}
			a(\ell)= \pi R- R\Theta\Big(\sqrt{(1-\sqrt{1-c\cdot n^{-\beta}})+c\cdot n^{-\gamma(n)}}\Big)= \pi R-  R\Theta(\sqrt{c\cdot n^{-\beta}+c\cdot n^{-\gamma(n)}}).
		\end{align}
		Now the arc length between two adjacent sections $B_i$ and $B_{i+1}$ is equal to $\pi R-a(\ell)$. 
		This means that the arc length between $B_i$ and $B_{i+1}$ scales as $\pi R-(\pi R-R\Theta\sqrt{c\cdot n^{-\beta}+c\cdot n^{-\gamma(n)})})=R\Theta(\sqrt{c\cdot n^{-\beta}+c\cdot n^{-\gamma(n)}})$.

		
		The maximal value of the number of possible areas $2k$, is the total circumference  of $\mathcal{C}$ divided by the arc length of an interval $B_i$ and the arc length between $B_i$ and $B_{i+1}$, which yields 
		\begin{equation}\label{eq:mbound}
			2k= \frac{2\pi R}{R\Theta(\sqrt{c\cdot n^{-\beta}+c\cdot n^{-\gamma(n)}})+ \sqrt{h(n)(h(n)+a)}}\in \Theta\Big(\min(n^{\beta/2},n^{\gamma(n)/2})\Big) .
		\end{equation}
		Thus, by choosing $c$ correctly, we can let $2k= s\cdot  \min(n^{\beta/2},n^{\gamma(n)/2})$ for any $s>0$. When all $i\in[2k]$ contain at least one vertex in $S$, any set of $2k$ vertices with exactly one vertex in each $B_i$ forms a co-matching, as illustrated in Figure~\ref{fig:circleclique}. Furthermore~\eqref{eq:areaempty} shows that with high probability, all $B_i$ are non-empty, as long as $n^{(3-\tau)/2-\beta}h(n)^{3/2}\to\infty$ as $n\to\infty$. 
		
		We therefore choose $\beta=(3-\tau)/5-\varepsilon$ and $\gamma(n)=(3-\tau)/5$. Then, with high probability there is a co-matching of size $2k =s\cdot  n^{(3-\tau)/10-\varepsilon}$. Thus, by Lemma~\ref{lem:co-matching-nr-cliques} and choosing $s$ sufficiently large yields that for fixed $b>0$ the number of maximal cliques can be bounded from below by
		\begin{equation}
			b^{ n^{(3-\tau)/10-\varepsilon}}
		\end{equation}
	\end{proof}
	

	Figure~\ref{fig:girglbnotorus} shows the lower bound of Theorem~\ref{thm:girg_non_torus} against $n$. As for the toroidal case, the super-polynomial growth may be dominated by lower-order linear terms. 
	
	\subsection{Non-Threshold Case}
        \label{sec:girg-non-threshold-case}
	We now show how our constructions extend to the non-threshold
        GIRG, where the connection probability is given by
        Equation~\eqref{eq:edgeprob} instead of
        Equation~\eqref{eq:edgeprobthreshold}.
	\begin{theorem}
          \label{thm:non-threshold-torus}
          Let $G^{(n)}$ be a $d$-dimensional GIRG on $n$ vertices with $T > 0$, $\mu>0$ and $\tau\in(2,3)$ and let
          $s > 0$ be an arbitrary constant.  Then, there exists an
          $\varepsilon > 0$ such that, with high probability, $G$
          contains a co-matching of size
          $s \cdot n^{(3 - \tau)/5} \cdot (\varepsilon \log n)^{-(1/2)}$ as induced subgraph.
	\end{theorem}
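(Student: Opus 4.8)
The plan is to reuse the evenly spaced boxes $B_1, \dots, B_{2k}$ on $\mathbb{T}^d$ from the threshold case, now accounting for the fact that opposite pairs can no longer be forced to be non-adjacent with certainty. I would again arrange boxes of height $h(n)$ and gap $g(n)$ so that non-opposite boxes lie within distance $\ell = \tfrac{1}{2} - g(n)$ and opposite boxes at distance at least $u = \tfrac{1}{2} - h(n)$, and restrict to vertices whose weight lies in a range $[w_\ell, w_u)$. The range would be chosen exactly so that every non-opposite pair $v, v'$ satisfies $\frac{w_v w_{v'}}{n\mu\|x_v - x_{v'}\|^d} \ge 1$, which by Equation~\eqref{eq:edgeprob} makes $v$ and $v'$ adjacent with probability exactly $1$, since the minimum there then equals $1$. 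This handles all clique edges deterministically, precisely as in the threshold case.

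The genuinely new ingredient concerns the opposite pairs, whose edge probability is now positive. Here I would push the upper weight bound of Lemma~\ref{lem:evenly-spaced-boxes-co-matching} slightly downward (by a factor $(1 - \delta(n))^{1/2}$) so that every opposite pair has $\frac{w_v w_{v'}}{n\mu\|x_v - x_{v'}\|^d} \le 1 - \delta(n)$ for some $\delta(n) > 0$, giving connection probability at most $\bar p := (1 - \delta(n))^{1/T} < 1$. Since the opposite and non-opposite distances $u$ and $\ell$ differ only by $O(g(n) - h(n))$, non-emptiness of the weight range forces $\delta(n) = O(g(n) - h(n))$; in particular $\delta(n) \to 0$ and $\bar p \to 1$, so a single opposite pair cannot be made unlikely to be adjacent. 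To compensate I would populate each box with many vertices of the desired weight rather than a single one. Writing $m$ for the expected number per box, Corollary~\ref{col:nr-vertices-in-weight-range-dominant} and Lemma~\ref{lem:volume-of-boxes} give $m = \Theta((g(n) - h(n))\, h(n)\, n^{(3-\tau)/2})$, and a Chernoff bound with a union bound over boxes (as in the proof of Theorem~\ref{thm:girg-torus-co-matching}) guarantees that every box contains $\Theta(m)$ such vertices with high probability.

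With the boxes populated, the co-matching is extracted as follows. For each pair of opposite boxes $(B_i, B_{i+k})$ the potential edges between them form an independent complete bipartite instance, and the probability that \emph{all} of its $\Theta(m^2)$ cross pairs are present is at most $\bar p^{\,\Theta(m^2)} \approx \exp(-\Theta(\delta(n)\, m^2))$. A union bound over the $k$ opposite pairs then shows that, with high probability, each pair $(B_i, B_{i+k})$ contains at least one non-adjacent pair of vertices, provided $\delta(n)\, m^2 \gg \log k$. Choosing such a non-adjacent pair $(v_i, v_{i+k})$ in every opposite box pair selects one vertex per box; all non-opposite pairs among the selected vertices are adjacent by construction (and this holds irrespective of which qualifying vertex is picked in each box), while each selected opposite pair is non-adjacent, so the selection induces a co-matching on $2k$ vertices.

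It remains to optimize the parameters. Substituting $\delta(n) = \Theta(g(n) - h(n))$ and the expression for $m$, the requirement $\delta(n)\, m^2 \gg \log k$ becomes $(g(n) - h(n))^3\, h(n)^2\, n^{3-\tau} \gg \log n$, while the co-matching size equals $2k = 1/(g(n) + h(n))$. Balancing these (taking $g(n)$ and $h(n)$ of the same polynomial order, roughly $n^{-(3-\tau)/5}$ up to logarithmic corrections, and tuning the leading constant) makes $2k$ scale like $n^{(3-\tau)/5}$ up to a logarithmic factor, which can be arranged to yield the claimed size $s\, n^{(3-\tau)/5}(\varepsilon \log n)^{-1/2}$ for a suitable $\varepsilon > 0$ and any prescribed $s > 0$. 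The main obstacle is exactly this loss: because the toroidal geometry only permits a vanishing gap $\delta(n) \to 0$, opposite pairs are almost surely adjacent individually, and the sole way to create the required non-edges is to overpopulate the boxes so that the full bipartite graph between opposite boxes is unlikely to appear. The resulting trade-off between the number of boxes and the number of vertices per box is what degrades the exponent from $(3-\tau)/4$ in the threshold case to $(3-\tau)/5$.
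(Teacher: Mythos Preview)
Your proposal is correct and follows essentially the same route as the paper: the same evenly spaced boxes, the same deterministic adjacency for non-opposite pairs via the choice of $w_\ell$, a slightly lowered $w_u$ to force the opposite-pair ratio strictly below $1$, overpopulating each box with polynomially many vertices of the right weight, and then a union bound over opposite box pairs to guarantee at least one non-edge per pair. The paper makes the specific choice $w_u=(\tfrac12-(T/d+1)h)^{d/2}\sqrt{\mu n}$ (which amounts to $\delta(n)=\Theta(h(n))$ in your parameterisation) and $g(n)=h(n)(\varepsilon\log n)^{1/2}$, but this is just one admissible point in the balancing you describe, yielding the same exponent $(3-\tau)/5$ and the stated $(\varepsilon\log n)^{-1/2}$ loss.
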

	\begin{proof}
          As before, we consider $2k$ boxes $B_1, \ldots, B_{2k}$ with
          height $h(n)$ and gap $g(n)$, though now we choose
          \begin{align*}
            g(n) = h(n) \cdot (\varepsilon \log n)^{1/2} \qquad \text{and} \qquad h(n) = \frac{1}{2s} \cdot n^{-\frac{3 - \tau}{5}},
          \end{align*}
          for a constant $\varepsilon > 0$, which we determine below.
          We again focus on the vertex set $S$ containing all vertices
          with weights in $[w_\ell, w_u]$, though our choice for $w_u$
          is slightly different.  In particular, we choose
          \begin{align*}
            w_\ell = (1/2 - g(n))^{d/2} \sqrt{\mu n} \qquad \text{and} \qquad w_u = (1/2 - (T/d + 1)h)^{d/2}\sqrt{\mu n}.
          \end{align*}

          Our goal now is to show that, with high probability, there
          exists at least one co-matching that contains one vertex
          from each box.  That is, if $M$ denotes the number of such
          co-matchings, we want to show that $M > 0$ with high
          probability.

          We start by bounding the number of vertices from
          $S$ that lie in a given box $B_i$, denoted by $S(B_i)$.
          Since $T$ and $d$ are constants and
          $(\varepsilon \log n)^{1/2} \in \omega(1)$, we have
          $(T/d + 1)h(n) \in o(g(n))$, allowing us to bound
          $\mathbb{E}[|S|]$ using
          Corollary~\ref{col:nr-vertices-in-weight-range-dominant},
          which yields
          \begin{align*}
            \mathbb{E}[|S|] \in \Theta\left(gn^{\frac{3 - \tau}{2}}\right).
          \end{align*}
          Moreover, since the vertices are distributed uniformly at
          random in the ground space, the expected fraction of vertices from
          $S$ that lie in the box $B_i$ is proportional to its volume,
          which is $\Theta(h)$ according to
          Lemma~\ref{lem:volume-of-boxes}.  It follows that
          \begin{align*}
            \mathbb{E}[|S(B_i)|] \in \Theta\left(g h n^{\frac{3 - \tau}{2}}\right) \in \Theta\left(h(n)^2 n^{\frac{3 - \tau}{2}} (\varepsilon \log n)^{1/2}\right) \in \Theta\left( n^{(3 - \tau)(1/2 - 2/5)} (\varepsilon \log n)^{1/2}\right).
          \end{align*}
          Analogous to the proof of Lemma~\ref{lem:volume-of-boxes} we
          can apply a Chernoff bound to conclude that the
          number of vertices in $S(B_i)$ matches the expected value
          (up to constant factors) with probability $1 - O(n^{-c})$
          for any $c > 0$.  Note that the number of boxes is given by
          \begin{align}
            \label{eq:temperature-co-match-size}
            2k = \frac{1}{g(n) + h(n)} = \frac{1}{h((\varepsilon \log n)^{1/2} + 1)} = \frac{2s \cdot n^{\frac{3 - \tau}{5}}}{(\varepsilon \log n)^{1/2} + 1},
          \end{align}
          which is at most $n$. Thus, applying the union bound yields
          that with high probability \emph{every} box contains
          $n' \in \Theta(n^{(3 - \tau)(1/2 - 2/5)} (\varepsilon \log
          n)^{1/2})$ vertices.  In the following, we implicitly
          condition on this event to happen.  Now recall that a
          co-matching consisting of one vertex from each box forms if
          each vertex is adjacent to the vertices in all other boxes,
          \emph{except} the vertex from the opposite box.

          Despite the temperature, vertices in non-opposite boxes are
          still adjacent with probability $1$, since the weight of two
          such vertices $i$ and $j$ is at least $w_\ell$ and their
          distance is at most $\frac{1}{2} - g(n)$ and, thus, according
          to Equation~\ref{eq:edgeprob}
          \begin{align*}
            p_{ij} = \min \left\{ \left( \frac{w_i w_j}{n \mu ||x_i - x_j||_2^d}\right)^{1/T}, 1 \right\} \ge \min \left\{ \left(\frac{w_\ell^2}{n \mu (1/2 - g(n))^d}\right)^{1/T}, 1\right\} = 1.
          \end{align*}

          In contrast to the threshold case, however, the probability
          for vertices in opposite boxes to be adjacent is no longer
          $0$.  Since two such vertices $i$ and $j$ have distance at
          least $\frac{1}{2} - h(n)$ and weight at most
          $w_u = (\frac{1}{2} - (T/d + 1)h)^{d/2}\sqrt{\mu n}$, we can
          bound the probability for them to be adjacent using
          Equation~\ref{eq:edgeprob}, which yields
          \begin{align*}
            p_{ij} &= \min \left\{\left(\frac{w_i w_j}{n \mu ||x_i - x_j||_2^d}\right)^{1/T}, 1\right\} \\
                   & \le \min \left\{ \left(\frac{w_u^2}{n \mu (\frac{1}{2} - h(n))^d}\right)^{1/T} , 1\right\} \\
                   &= \min \left\{ \left(\frac{1 - (T/d + 1)2h(n)}{1 - 2h(n)}\right)^{d/T} , 1\right\} \\
                   &= \min \left\{ \left(1 - \frac{T}{d} \cdot \frac{2h(n)}{1 - 2h(n)}\right)^{d/T} , 1\right\} \\
                   &\le \min \left\{ \left(1 - \frac{T}{d} \cdot 2h(n) \right)^{d/T} , 1\right\}.
          \end{align*}
          Since $1 - x \le e^{-x}$, we obtain $p_{ij} \le e^{-2h(n)}$.

          With this we are now ready to bound the probability
          $\Prob{M > 0}$, that at least one co-matching forms that
          contains one vertex from each box.  To this end, we need to
          find one non-edge in each pair of opposite boxes, i.e., each
          such pair needs to contain two vertices (one from each box)
          that are not adjacent.  Conversely, the only way to
          \emph{not} find a co-matching is if there exists one pair of
          opposite boxes such that all vertices in one box are
          adjacent to all vertices in the other.  This happens with
          probability at most $(p_{ij})^{(n')^2}$.  Since there are
          $k$ pairs of opposite boxes, applying the union bound yields
          \begin{align}\label{eq:pm0}
            \Prob{M = 0} \le k(p_{ij})^{(n')^2} \le k \exp(-2h(n) (n')^2).
          \end{align}
          Now recall that
          $n' \in \Theta(n^{(3 - \tau)(1/2 - 2/5)} (\varepsilon \log n)^{1/2})$ and
          that $h(n) = 1/(2s) \cdot n^{-(3 - \tau)/5}$.  Consequently, we obtain
          \begin{align*}
            \Prob{M = 0} &\le k \exp\left(- \Theta\left(n^{-(3-\tau)/5} \cdot n^{(3 - \tau)(1 - 4/5)} \cdot \varepsilon \log n \right) \right) \\
                         &= k \exp\left(-\Theta(\varepsilon \log n)\right) \\
                         &= k n^{-\Theta(\varepsilon)}. \\
          \end{align*}
          Moreover, since $k \in O(n^{(3 - \tau)/5})$ (see
          Equation~\ref{eq:temperature-co-match-size}), we have
          \begin{align*}
            \Prob{M = 0} \in O \left( n^{(3 - \tau)/5 - \Theta(\varepsilon)} \right),
          \end{align*}
          meaning, for sufficiently large $n$, we can choose
          $\varepsilon$ such that $\Prob{M = 0} \in O(n^{-1})$ and,
          conversely, $\Prob{M > 0} = 1 - O(n^{-1})$.  So with high
          probability there exists at least one co-matching of size
          \begin{align*}
            2k = \frac{2s \cdot n^{\frac{3 - \tau}{5}}}{(\varepsilon \log n)^{1/2} + 1} \ge \frac{2s \cdot n^{\frac{3 - \tau}{5}}}{2 (\varepsilon \log n)^{1/2}} = \frac{s \cdot n^{\frac{3 - \tau}{5}}}{(\varepsilon \log n)^{1/2}},
          \end{align*}
          where the inequality holds for sufficiently large $n$.
        \end{proof}

	Together with Lemma~\ref{lem:co-matching-nr-cliques} we obtain
        the following corollary.
	
	\begin{corollary}\label{cor:maxcliquesgirg-non-threshold}
          Let $G^{(n)}$ be a $d$-dimensional GIRG on $n$ vertices with $T > 0$, $\mu>0$ and $\tau\in(2,3)$ and let
          $b > 0$ be an arbitrary constant.  Then there exists an
          $\varepsilon > 0$ such that, with high probability, the
          number of maximal cliques in $G^{(n)}$ is at least
          \begin{equation}
              b^{n^{(3 - \tau)/5} \cdot (\varepsilon \log n)^{-(1/2)}}.
          \end{equation}
	\end{corollary}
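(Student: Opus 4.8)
The plan is to combine Theorem~\ref{thm:non-threshold-torus} with Lemma~\ref{lem:co-matching-nr-cliques}, exactly as the threshold case was handled in Corollary~\ref{cor:maxcliquesgirg}. First I would invoke Theorem~\ref{thm:non-threshold-torus} with a constant $s$ to be fixed later: it supplies an $\varepsilon > 0$ such that, with high probability, $G^{(n)}$ contains an induced co-matching on $2k$ vertices with $2k = s \cdot n^{(3-\tau)/5}(\varepsilon \log n)^{-1/2}$, hence $k = \tfrac{s}{2}\, n^{(3-\tau)/5}(\varepsilon \log n)^{-1/2}$. For the rest of the argument I condition on this high-probability event.

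By Lemma~\ref{lem:co-matching-nr-cliques}, the induced co-matching $G^{(n)}[S]$ on these $2k$ vertices has exactly $2^k$ maximal cliques, each obtained by selecting one endpoint of every non-edge of the co-matching. The only step requiring a word of justification is that these $2^k$ maximal cliques of the induced subgraph witness at least as many maximal cliques of the ambient graph $G^{(n)}$. For this I would argue as follows: every maximal clique $Q$ of $G^{(n)}[S]$ is a clique of $G^{(n)}$ and therefore extends to some maximal clique $f(Q)$ of $G^{(n)}$. The assignment $Q \mapsto f(Q)$ is injective, since if $f(Q_1) = f(Q_2) = C$ then $Q_1 \cup Q_2 \subseteq C \cap S$ is a clique of $G^{(n)}[S]$ containing the maximal clique $Q_1$, forcing $Q_2 \subseteq Q_1$ and, by symmetry, $Q_1 = Q_2$. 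Consequently $G^{(n)}$ has at least $2^k$ maximal cliques.

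It then remains to match the exponent. Writing $2^k = (2^{s/2})^{\,n^{(3-\tau)/5}(\varepsilon \log n)^{-1/2}}$, I would choose the constant $s$ large enough that $2^{s/2} \ge b$ (for instance $s = 2\log_2 b$ when $b > 1$, while any $s > 0$ already suffices when $b \le 1$). With this choice the number of maximal cliques is at least $b^{\,n^{(3-\tau)/5}(\varepsilon \log n)^{-1/2}}$ with high probability, which is the claim.

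There is no genuine obstacle here, as the corollary is a direct consequence of the theorem; the single non-bookkeeping ingredient is the injectivity argument above, which ensures that the cliques counted inside the induced co-matching are not collapsed when passing to maximal cliques of the whole graph, so that the lower bound of Lemma~\ref{lem:co-matching-nr-cliques} transfers verbatim to $G^{(n)}$.
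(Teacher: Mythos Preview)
Your proposal is correct and follows exactly the paper's approach: the paper simply states that the corollary follows from Theorem~\ref{thm:non-threshold-torus} together with Lemma~\ref{lem:co-matching-nr-cliques}, and you have spelled this out. Your injectivity argument showing that maximal cliques of the induced co-matching lift to distinct maximal cliques of $G^{(n)}$ is a detail the paper leaves implicit (here and in the analogous Corollary~\ref{cor:maxcliquesgirg}), so if anything you have been more careful than the original.
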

	
	We can extend Theorem~\ref{thm:girg_non_torus} to non-zero temperature in a very similar fashion (proof is in Appendix~\ref{app:prooftemperature})
	\begin{theorem}\label{thmnonzero2dim}
		For any $\varepsilon>0$ and $b>1$, a 2-dimensional GIRG $G^{(n)}$ on $n$ vertices with $T>0$ and vertex positions uniformly distributed over $[0,1]^2$ contains with high probability at least
		\begin{equation}
			b^{n^{\frac{3-\tau}{12}-\varepsilon}\log(n)^{1-\varepsilon}}
		\end{equation}
		maximal cliques. 
	\end{theorem}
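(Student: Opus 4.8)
The plan is to merge the two-dimensional circular-segment construction of Theorem~\ref{thm:girg_non_torus} with the probabilistic-disconnection argument used for the torus in Theorem~\ref{thm:non-threshold-torus}. Concretely, I would again place $2k$ evenly spaced circular segments $B_1,\dots,B_{2k}$ of height $h(n)$ on a fixed circle $\mathcal{C}\subseteq[0,1]^2$ of constant radius $R<1/4$, call $B_i$ and $B_{i+k}$ \emph{opposite}, and again work with the set $S$ of vertices whose weight lies in a window $[w_\ell,w_u)$. As before, the window is pinned at the bottom by $w_\ell=\ell\sqrt{\mu n}$, where $\ell$ is the largest distance occurring between two \emph{non-opposite} segments, so that by Equation~\eqref{eq:edgeprob} every pair of vertices of $S$ in non-opposite segments is adjacent with probability $1$ even for $T>0$, since their connection probability saturates the minimum at $1$.

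The essential change from the threshold case is the treatment of opposite segments. For $T>0$ they are no longer automatically non-adjacent, so instead of putting $w_u$ exactly at the threshold value $t\sqrt{\mu n}$ (with $t$ the distance between opposite segments) I would lower it slightly to $w_u=t(1-\eta)\sqrt{\mu n}$, where the shift $\eta$ is of order $h(n)$. Then for a pair $i,j$ in opposite segments Equation~\eqref{eq:edgeprob} gives $p_{ij}\le\big((1-\eta)^2\big)^{1/T}\le e^{-\Theta(h(n))}$, exactly as in Theorem~\ref{thm:non-threshold-torus}. Since the shift must stay below the width of the weight window for $S$ to be non-empty, this couples the geometry (the gap between $\ell$ and $t$) to the strength of the disconnection.

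Next I would count, for each segment, the expected number $n'$ of vertices of $S$ it contains. By Corollary~\ref{col:nr-vertices-in-weight-range-dominant} we have $\Exp{|S|}=\Theta(\mathrm{width}\cdot n^{(3-\tau)/2})$, and since the vertices are uniform and a segment of height $h(n)$ has area $\Theta(h(n)^{3/2})$ (as computed in the proof of Theorem~\ref{thm:girg_non_torus}), we get $n'=\Theta(\mathrm{width}\cdot h(n)^{3/2}\cdot n^{(3-\tau)/2})$. A Chernoff bound together with a union bound over the $2k\le n$ segments then shows that, with high probability, every segment carries $\Theta(n')$ vertices of $S$, and I condition on this event. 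A co-matching using one vertex per segment exists as soon as every opposite pair contains a non-edge, and the only way this fails is that some opposite pair has all of its $\Theta((n')^2)$ cross pairs present; a union bound over the $k$ opposite pairs gives $\Prob{M=0}\le k\,(p_{ij})^{(n')^2}\le k\exp(-\Theta(h(n))\,(n')^2)$. Lemma~\ref{lem:co-matching-nr-cliques} then turns the resulting co-matching of size $2k$ into $2^k$ maximal cliques.

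It remains to choose the parameters. Writing $h(n)=n^{-\gamma}$ and $\mathrm{width}=n^{-\beta}$, the same arc-length packing of $\mathcal{C}$ as in Equation~\eqref{eq:mbound} yields $2k=\Theta\big(\min(n^{\beta/2},n^{\gamma/2})\big)$, while the co-matching condition $h(n)(n')^2\gtrsim\log n$ reduces after substitution to $2\beta+4\gamma\lesssim 3-\tau$ (with the constraint $\beta\le\gamma$ keeping the window non-empty). Balancing $\beta=\gamma$ saturates this at $\beta=\gamma=(3-\tau)/6$, producing a co-matching of size $\Theta(n^{(3-\tau)/12})$ and hence the stated exponent; the extra factor $n^{-\varepsilon}$ is the price of staying strictly inside the feasible region, and the $\log(n)^{1-\varepsilon}$ term reflects the logarithmic corrections needed to keep $\Prob{M=0}\to 0$ at this critical scaling. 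I expect the main obstacle to be precisely this balancing at the boundary $2\beta+4\gamma=3-\tau$: in contrast to the threshold case, the binding constraint is now the \emph{existence} of the co-matching rather than the mere non-emptiness of the segments, so one must track the logarithmic factors as carefully as the $\varepsilon\log n$ bookkeeping in Theorem~\ref{thm:non-threshold-torus} in order to certify simultaneously that every segment is populated and that every opposite pair contains a non-edge with high probability.
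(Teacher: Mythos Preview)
Your proposal is correct and follows the same route as the paper: reuse the circular-segment construction from Theorem~\ref{thm:girg_non_torus}, note that non-opposite pairs remain deterministically adjacent under~\eqref{eq:edgeprob}, bound $\Prob{M=0}\le k\,p_{ij}^{\Theta((n')^2)}$ via a union over the $k$ pairs of opposite segments, and then balance the geometric parameters. The one minor technical difference is how you force $1-p_{ij}\ge\Theta(h(n))$: you lower $w_u$ by a factor $1-\Theta(h(n))$, exactly mirroring Theorem~\ref{thm:non-threshold-torus}, whereas the paper keeps the weight window of Theorem~\ref{thm:girg_non_torus} unchanged and instead uses that a constant fraction of vertex pairs in opposing segments lie at distance at least $t+h(n)$, which equally gives $p_{ij}\le 1-\Theta(n^{-\gamma})$. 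Both devices feed the same constraint $2\beta+4\gamma<3-\tau$, and your balancing $\beta=\gamma\approx(3-\tau)/6$ correctly yields the exponent $(3-\tau)/12$ stated in the theorem.
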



\section{Inhomogeneous Random Graphs (IRG)}\label{sec:IRG}
We now turn to a random graph model that is scale-free, but does not
contain a source of geometry, the inhomogeneous random graph (IRG), or Chung-Lu random graph~\cite{chung2002}. We
show that also in this model, the number of maximal cliques scales
super-polynomially in the network size $n$. Again, every vertex $i$ draws its weight $w_i$ independently from
the power-law distribution~\eqref{eq:pl}, where we will again assume that $\tau\in(2,3)$. Then, all pairs of vertices $u$ and $v$ connect independently with probability
\begin{equation}\label{eq:conprob}
  p(w_u, w_v) = \min\Big(\frac{w_u w_v}{\mu n},1\Big),
\end{equation}
where $\mu$ controls the expected average degree.  

To show a lower bound on the number of maximal cliques, we make use of
the fact that an IRG contains a not too small rather dense subgraph
with high probability.  The following theorem is obtained by looking
just at the subgraph induced by vertices with weights in a certain
range.  We chose the specific range to satisfy three criteria.  First,
the range is sufficiently large, such that the subgraph contains many
vertices.  Second, the range is sufficiently small such that all
vertex pairs in the subgraph are connected with a similar probability.
And third, the weights are large enough such that a densely connected
subgraph forms, but not so large that the vertices merge into a single
clique.

\begin{theorem}
  \label{thm:max_cliques_irg}
  Let $G^{(n)}$ be an IRG on $n$ vertices with $\tau \in (2, 3)$ and $\mu>0$ and let $b > 1$ and
  $\varepsilon \in (0, \frac{3 - \tau}{4})$ be arbitrary constants.
  Then, the expected number of maximal cliques in $G^{(n)}$ is in
  \begin{equation}
      \Omega\big(b^{n^{(3 - \tau) / 4 - \varepsilon} \log n}\big).
  \end{equation}
\end{theorem}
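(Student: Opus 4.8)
The plan is to mirror the GIRG threshold argument of Corollary~\ref{cor:maxcliquesgirg}, but now in the geometry-free IRG where the connection probability~\eqref{eq:conprob} depends only on weights. The central object is again a co-matching, so by Lemma~\ref{lem:co-matching-nr-cliques} it suffices to exhibit an induced co-matching of size $\Theta(n^{(3-\tau)/4-\varepsilon}\log n)$ in expectation. First I would isolate a set $S$ of vertices whose weights lie in a narrow range $[w_\ell, w_u)$, chosen so that $S$ is simultaneously large (polynomially many vertices) and ``dense but not too dense'': concretely I want vertex pairs inside $S$ to be non-adjacent with some probability bounded away from both $0$ and $1$. Since $p(w_u,w_v)=\min(w_uw_v/(\mu n),1)$, pairs of vertices with weights near $\sqrt{\mu n}$ connect with probability close to $1$, while pairs slightly below this threshold connect with a probability strictly below $1$. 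The natural choice is to center the weight window just below $\sqrt{\mu n}$, say $w\in[a\sqrt{\mu n}(1-\delta(n)), a\sqrt{\mu n}]$ for a suitable constant $a$ and a vanishing $\delta(n)$, so that each pair is a non-edge with probability $q = 1 - a^2 \in (0,1)$ (up to lower-order corrections). By Corollary~\ref{col:nr-vertices-in-weight-range-dominant}, the expected size of such a set $S$ is $\Theta(\delta(n)\, n^{(3-\tau)/2})$.

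With $S$ in hand, the second step is combinatorial rather than geometric: in the GIRG the disjoint boxes forced the co-matching structure geometrically, but here I must \emph{find} a large induced co-matching inside the random subgraph on $S$. Equivalently, passing to the complement, I need a large induced matching in the random graph on $S$ where each pair is present (a non-edge of the original) independently with probability $q\in(0,1)$. This is essentially a sparse-in-the-complement Erd\H{o}s--R\'enyi graph $G(|S|, q)$, and I would extract an induced matching greedily exactly as in the proof of Theorem~\ref{thm:erdense}: repeatedly pick an edge (a non-edge of the IRG), add it to the matching, and delete its endpoints together with their neighbors. Here, however, degrees in $G(|S|,q)$ are of order $q|S|$ rather than constant, so a pure greedy deletion would only yield a matching of size $\Theta(|S|/(q|S|)) = \Theta(1/q)$, which is far too small. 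The fix is the standard two-round approach: first subsample a sparse vertex subset $S'\subseteq S$ of size $m$ so that the induced complement graph $G(m,q)$ has expected degree $qm$ made constant, i.e. take $m=\Theta(1/q)$ — but that again caps the matching at constant size.

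Because of this, the correct route is to subsample so that the non-edges within the chosen subset are rare: pick $S'\subseteq S$ by keeping each vertex independently with a small probability, reducing the expected number of complement-edges among $S'$ to be of the same order as $|S'|$, at which point the greedy induced-matching extraction of Theorem~\ref{thm:erdense} produces a matching linear in $|S'|$. Tuning $\delta(n)$ and the subsampling rate against each other, and optimizing the exponent, is what produces the target size $n^{(3-\tau)/4-\varepsilon}$ with the extra $\log n$ factor coming from the logarithmic slack one can afford in the Chernoff/union-bound step (analogous to how $g(n)$ and $h(n)$ were balanced in Theorem~\ref{thm:girg-torus-co-matching}). I would verify via a Chernoff bound that $|S|$ and $|S'|$ concentrate around their means and via a first/second-moment or union-bound argument that the greedy process does not terminate prematurely, then invoke Lemma~\ref{lem:co-matching-nr-cliques} to convert the induced co-matching of size $2k = \Theta(n^{(3-\tau)/4-\varepsilon}\log n)$ into $2^{k}\ge b^{\,n^{(3-\tau)/4-\varepsilon}\log n}$ maximal cliques, absorbing the base change into the constant.

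The main obstacle I anticipate is precisely the tension in the middle step: unlike the GIRG, there is no geometry to \emph{deterministically} force which pairs are non-adjacent, so the co-matching must be carved out of a genuinely random dense-complement graph, and the interplay between the density parameter $q$ (controlled by the weight window via $\delta(n)$) and the matching-extraction efficiency is delicate. Getting the exponent to land exactly at $(3-\tau)/4-\varepsilon$ rather than something smaller requires choosing the weight window and subsampling so that the number of vertices $\Theta(\delta(n) n^{(3-\tau)/2})$ and the per-pair non-adjacency probability combine optimally; I expect the bulk of the real work, and the source of the $\varepsilon$ and $\log n$ factors, to live in balancing these two quantities and in the concentration estimates that guarantee the induced matching survives with high probability.
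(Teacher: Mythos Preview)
Your plan has a genuine gap at the co-matching extraction step. With the window $[a\sqrt{\mu n}(1-\delta(n)),\,a\sqrt{\mu n}]$ for a \emph{constant} $a<1$, the pairwise non-edge probability in $S$ is $q=1-a^2+o(1)$, a constant bounded away from $0$ and $1$. The complement graph on $S$ is then essentially a $G(|S|,q)$ with constant $q$, and in such a graph the largest induced matching has size only $\Theta(\log|S|)$: the expected number of induced matchings on $2k$ vertices is at most $|S|^{2k}q^{k}(1-q)^{\binom{2k}{2}-k}$, which forces $k=O(\log|S|)$. No subsampling can beat this, because subsampling to $|S'|=m$ just gives a smaller $G(m,q)$ with the same constant $q$, and the bound $k=O(\log m)$ still applies. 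Your two attempted fixes (make $qm$ constant; make the complement-edge count comparable to $m$) both collapse to $m=\Theta(1/q)=\Theta(1)$ precisely because $q$ is constant. The output is therefore $2^{O(\log|S|)}=|S|^{O(1)}$ maximal cliques, which is polynomial in $n$, not super-polynomial. The parameter $\delta(n)$ you propose to tune only controls $|S|$, not $q$; the knob that controls $q$ is $a$, and you have frozen it.

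The paper does \emph{not} go through co-matchings here. Instead it takes the weight window right at the clique threshold, $[w_\ell,w_u]=[\sqrt{(1-g(n))\mu n},\,\sqrt{(1-h(n))\mu n}]$ with $g(n),h(n)\to 0$, so that the induced subgraph $G'$ on $n'\in\Theta(n^{(3-\tau)/4+\varepsilon}\log n)$ vertices has all pairwise edge probabilities in $[1-g(n),\,1-h(n)]$, i.e.\ $G'$ is a \emph{super-dense} almost-clique. It then applies the Erd\H{o}s--R\'enyi first-moment lower bound of Theorem~\ref{thm:er-lower-bound} directly to $G'$: one counts $\Exp{N_k}\ge\binom{n'}{k}(1-g)^{\binom{k}{2}}(1-(1-h)^k)^{n'-k}$ for a carefully chosen $k=\Theta(n^{(3-\tau)/4-\varepsilon})$, and the balancing of $g,h,k$ is what produces the extra $\log n$ in the exponent. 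If you want to rescue the co-matching route, you would likewise need to push $a\to 1$ so that $q\to 0$; with $q\,n'=\Theta(1)$ (i.e.\ $h\approx n^{-(3-\tau)/4}$) the complement on $S$ becomes a sparse $G(n',c/n')$ and Theorem~\ref{thm:erdense} then yields an induced matching of size $\Theta(n')=\Theta(n^{(3-\tau)/4})$, which in fact would give a bound without the $-\varepsilon$ loss but also without the $\log n$ gain.
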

\begin{proof}
  We show that already the subgraph $G'$ induced by the vertices in a
  certain weight range has the claimed expected number of maximal cliques.  To
  define $G'$, we consider weights in $[w_\ell, w_u]$ with
  $w_\ell = \sqrt{(1 - g(n)) \mu n}$ and $w_u = \sqrt{(1 - h(n)) \mu n}$.
  To abbreviate notation, let
  \begin{equation*}
    \gamma(n) = n^{\frac{3 - \tau}{4}}.
  \end{equation*}
  For constants $a$ and $c$ we determine later, we choose $g(n)$ and $h(n)$
  as
  \begin{equation*}
    g(n) = a h(n) \quad \text{and} \quad
    h(n) = c n^\varepsilon \log(n) \gamma(n)^{-1}.
  \end{equation*}
  Note that $w_\ell < w_u$ if and only if $a > 1$.  Let $n'$ be the
  number of vertices in $G'$.  From
  Lemma~\ref{lem:nr-vertices-in-weight-range} it follows
  \begin{equation*}
    \Exp{n'}
    \in \Theta(\gamma(n)^2 \cdot n^\varepsilon \log(n) \gamma(n)^{-1})
    \in \Theta(n^\varepsilon \gamma(n) \log n).
  \end{equation*}
  As every vertex has independently the same probability to be in
  $G'$, a Chernoff bound implies that
  $n' \in \Theta(n^\varepsilon\gamma(n)  \log n)$ holds with high
  probability.  Thus, in the following, we implicitly condition on
  this event to happen.

  To give a lower bound on the number of maximal cliques in $G'$, we
  only count the number $N_k$ of maximal cliques of size $k$ with
  \begin{equation*}
    k = \frac{3 \varepsilon}{c} n^{-\varepsilon}\gamma(n).
  \end{equation*}
  We note that this is the same constant $c$ as in the definition of
  $h(n)$ above. As the number of maximal cliques in $G'$ is a lower bound for the number of maximal cliques in $G^{(n)}$, we lower bound the expectation of $N_k$, by the expected number of maximal cliques in $G'$,
  \begin{align}\label{eq:ENK}
    \Exp{N_k} \ge {n' \choose k} \Prob{C \text{ is a clique}} \Prob{C \text{ maximal} \mid C \text{ is a clique}}.
  \end{align}
  In the following, we give estimates for the three terms
  individually.

  We start with the event that $C$ is a clique.  Due to the lower and
  upper bound on the weights in $G'$, it follows that any pair of
  vertices in $G'$ is connected with probability at least
  $p_\ell = 1 - g(n)$ and at most $p_u = 1 - h(n)$.  Thus, for a fixed
  subset $C$ of vertices of size $|C| = k$, the probability that all
  $k$ vertices are pairwise connected is at least
  $p_\ell^{k (k - 1) / 2} = (1 - g(n))^{k (k - 1) / 2}$.  As $g(n)$ goes to
  $0$ for growing $n$ and $1 - x \in \Omega(\exp(-x))$ in this case,
  we get
  \begin{equation}
    \label{eq:irg-subset-is-clique}
    \Prob{C \text{ is a clique}}
    \ge (1 - g(n))^{k (k - 1) / 2}
    \in \Omega\left(\exp\left(- \frac{g k (k - 1)}{2}\right)\right).
  \end{equation}

  For $C$ to be a maximal clique (conditioning on it being a clique),
  additionally no other vertex can be connected to all vertices from
  $C$.  This probability is at least
  $(1 - p_u^k)^{n' - k} = (1 - (1 - h(n))^k)^{n' - k}$.  As
  $1 - x \le \exp(-x)$, it follows that
  $(1 - h(n))^k \le \exp(-hk) = n^{-3\varepsilon}$, where the last
  equality follows from plugging in the values we chose for $h(n)$ and
  $k$.  Again using $1 - x \in \Omega(\exp(-x))$ for
  sufficiently small $x$, we can conclude that
  \begin{align}
    \notag
    \Prob{C \text{ maximal} \mid C \text{ is a clique}}
    &\ge (1 - (1 - h(n))^k)^{n' - k}\\
    \label{eq:irg-clique-is-maximal}
    &\ge (1 - n^{-3\varepsilon})^{n' - k}\\
    \notag
    &\in \Omega\Big( \exp\big( -n^{-3\varepsilon} (n' - k) \big) \Big).
  \end{align}

  Finally, for the binomial coefficient, we get
  \begin{equation}
    \label{eq:irg-binomial-coefficient}
    {n' \choose k}
    \ge \left( \frac{n'}{k} \right)^k
    = \exp\left(\log\left(\frac{n'}{k}\right) k\right).
  \end{equation}

  Our goal is to show that $\log(\Exp{N_k})\geq n^{(3-\tau)/4-\varepsilon}\log(n)\log(b)$. Thus, plugging
  Equations~\eqref{eq:irg-subset-is-clique},~\eqref{eq:irg-clique-is-maximal},
  and~\eqref{eq:irg-binomial-coefficient} into the logarithm of~\eqref{eq:ENK} yields that we need to show that for
  every constant $b > 1$, we can choose the constants $a > 1$ and $c$
  in the definitions of $g(n)$ and $h(n)$ such that
  \begin{equation}\label{eq:claimconstants}
    \log\left(\frac{n'}{k}\right) k - \frac{g k (k - 1)}{2} - n^{-3\varepsilon} (n' - k)
    \stackrel{\text{\tiny(to be shown)}}{>} n^{-\varepsilon} \gamma(n) \log n \log b
    = n^{\frac{3 - \tau}{4} - \varepsilon} \log n \log b.
  \end{equation}
  This can be achieved by simply plugging in the values for $n'$, $k$,
  and $g(n)$.  For the first (and only positive) term, we obtain
  \begin{align*}
    \log\left(\frac{n'}{k}\right) k
    &= \log\left(\frac{\Theta(n^\varepsilon \log(n)
        \gamma(n))}{3\varepsilon / c n^{-\varepsilon}\gamma(n)}\right)
      \frac{3 \varepsilon}{c} n^{-\varepsilon}\gamma(n)\\
    &= \log\left(n^{2\varepsilon}\Theta(\log n)\right)
      \frac{3 \varepsilon}{c} n^{-\varepsilon}\gamma(n)\\
    \intertext{and thus for sufficiently large $n$}
    &\ge \frac{6 \varepsilon^2}{c} n^{-\varepsilon} \gamma(n) \log n.
  \end{align*}
  For the negative terms, we start with the latter and obtain
  \begin{equation*}
    n^{-3\varepsilon} (n' - k)
    \in \Theta(n^{-3\varepsilon} n^\varepsilon \gamma(n) \log n)
    =  \Theta(n^{-2\varepsilon} \gamma(n) \log n).
  \end{equation*}
  This is asymptotically smaller than the positive term and can thus
  be ignored.
  For the other negative term, first note that
  $gk = 3 a \varepsilon \log n$.  Thus, we obtain
  \begin{equation*}
    \frac{g k (k - 1)}{2}
    \le \frac{ 3 a \varepsilon }{2} \log(n) k
    = \frac{ 3 a \varepsilon}{2} \log(n) \frac{3 \varepsilon}{c} n^{-\varepsilon}\gamma(n)
    = \frac{ 9 a \varepsilon^2}{2 c} n^{-\varepsilon} \gamma(n) \log n.
  \end{equation*}
  Together with the positive term, we obtain that for sufficiently
  large $n$, it holds
  \begin{align*}
    \log\left(\frac{n'}{k}\right) k - \frac{g k (k - 1)}{2}
    &\ge \frac{6 \varepsilon^2}{c} n^{-\varepsilon} \gamma(n) \log n -
      \frac{ 9 a \varepsilon^2}{2 c} n^{-\varepsilon} \gamma(n) \log n\\
    &= \left(6  - \frac{9 a}{2}\right) \frac{\varepsilon^2}{c}
      n^{-\varepsilon} \gamma(n) \log n.
  \end{align*}
  With this, we can choose $a > 1$ such that the first factor is
  positive and we can choose $c$ such that
  $\varepsilon^2 / c = \log b$, which proves~\eqref{eq:claimconstants}, as then
  \begin{equation*}
      \left(6  - \frac{9 a}{2}\right) \frac{\varepsilon^2}{c}
      n^{-\varepsilon} \gamma(n) \log n =C n^{(3-\tau)/4-\varepsilon}\log(n)\log(b).
  \end{equation*}
  for some $C>0$.
\end{proof}

Figure~\ref{fig:IRG lower bound} shows that the lower bound provided by Theorem~\ref{thm:max_cliques_irg} may still be smaller than linear for networks that are quite large, especially when $\tau\approx 3$.
	
	\subsection{Small Maximal Cliques are Rare}
	We now focus on the maximal cliques of a fixed size in the IRG. How many maximal cliques of size $k$ are present in an IRG?

	Let $N(K_k)$ denote the number of maximal cliques of size $k$. Furthermore, let $M_n(\varepsilon)$ denote 
	\begin{equation}\label{eq:Mn}
		M_n(\varepsilon)=\{ ({v_1,\ldots, v_k})\colon  w_i\in[\varepsilon,1/\varepsilon] (\mu n) ^{\frac{\tau-2}{\tau-1}}\ \text{ for } i=1,2 \text{ and }  w_i\in[\varepsilon,1/\varepsilon] (\mu n) ^{\frac{1}{\tau-1}} \ \forall i\in\{3,\dots,k\} \}.
	\end{equation}
	Thus, $M_n(\varepsilon)$ is the set of sets of $k$ vertices
        such that two vertices have weight proportional to
        $n^{(\tau-2)/(\tau-1)}$, and all other vertices have weights
        proportional to $n^{1/(\tau-1)}$.  Denote the number of maximal
        $k$-cliques with sets of vertices in $M_n(\varepsilon)$ by
        $N(K_k,M_n(\varepsilon))$. Then, the following theorem shows
        that these `typical' maximal cliques are asymptotically all
        maximal cliques. Furthermore, it shows that all maximal
        cliques of size $k>2$ occur equally frequently in scaling, and
        they also appear on the same types of vertices.  Here we use
        $\plim$ to denote convergence in probability.
	
	\begin{theorem}[Maximal clique localization]\label{thm:maxcliqeslocalized}
		Let $G^{(n)}$ be an IRG on $n$ vertices with $\tau \in (2, 3)$ and $\mu>0$. For any fixed $k\geq 3$,
		\begin{enumerate}[(i)]
			\item 
			For any $\varepsilon_n$ such that $\lim_{n\to\infty}\varepsilon_n=0$,
			\begin{equation}
				\frac{N\big(K_k,M_n\left(\varepsilon_n\right)\big) }{N(K_k)}\plim 1.
			\end{equation}
			\item Furthermore, for any fixed $0<\varepsilon<1$,\
			\begin{equation}\label{eq:Nsubmag}
				\Exp{N(K_k,M_n(\varepsilon))}=\Theta(n^{(3-\tau)(2\tau-3)/(\tau-1)}).
			\end{equation}
		\end{enumerate}
	\end{theorem}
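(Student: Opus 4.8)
The plan is to reduce both parts to first-moment computations and then, for part~(i), to add a second-moment concentration argument. For a fixed set $C$ of $k$ vertices with weights $w_1,\dots,w_k$, the probability that $C$ is a clique is $\prod_{i<j}\min(w_iw_j/(\mu n),1)$, and, conditioning on this, $C$ is maximal precisely when no other vertex attaches to all of $C$. Since the remaining $n-k$ vertices attach independently, the maximality probability equals $\prod_{u\notin C}(1-\prod_{i\in C}\min(w_uw_i/(\mu n),1))\approx\exp(-\lambda(C))$ with $\lambda(C)=n\int\prod_{i\in C}\min(w w_i/(\mu n),1)\,\dd F(w)$. By linearity of expectation, $\Exp{N(K_k)}$ and $\Exp{N(K_k,M_n(\varepsilon))}$ become integrals of $[\text{clique probability}]\times\exp(-\lambda(C))$ against the product weight density, over all weights respectively over the prescribed ranges.

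For part~(ii) I would evaluate this integral at the localized scales by rescaling: write the two low weights as $w=x(\mu n)^{(\tau-2)/(\tau-1)}$ and the $k-2$ large weights as $w=y(\mu n)^{1/(\tau-1)}$ with $x,y\in[\varepsilon,1/\varepsilon]$. Then every hub--hub pair connects with probability $1$, every hub--low pair with the constant probability $\min(xy,1)$, while the single low--low pair connects with probability $\sim x_1x_2(\mu n)^{(\tau-3)/(\tau-1)}$; rescaling the blocking integral shows $\lambda(C)$ converges to a finite positive constant, so $\exp(-\lambda(C))=\Theta(1)$. Collecting the powers of $n$ from the $\binom{n}{k}$ choices, the two low-weight densities ($n^{-(\tau-2)}$ each), the $k-2$ hub densities ($n^{-1}$ each), and the low--low edge gives the exponent $k-2(\tau-2)-(k-2)+(\tau-3)/(\tau-1)=(3-\tau)(2\tau-3)/(\tau-1)$, with a finite positive prefactor for each fixed $\varepsilon$; this is exactly the claim.

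Part~(i) rests on the fact that this localized configuration is the \emph{unique} maximizer of the first-moment exponent. Writing a generic weight as $w_i=n^{a_i}$, the contribution of a configuration $\mathbf a$ scales like $n^{E(\mathbf a)}$ with $E(\mathbf a)=\sum_i(1+a_i(1-\tau))+\sum_{i<j,\,a_i+a_j<1}(a_i+a_j-1)$, subject to the maximality-feasibility constraint $1+b(1-\tau)+\sum_i\min(b+a_i-1,0)\le 0$ for all $b\ge 0$ (otherwise $\lambda(C)\to\infty$ and the contribution is killed). Restricting first to configurations with $j$ coordinates at $(\tau-2)/(\tau-1)$ and $k-j$ at $1/(\tau-1)$ gives $E(j)=(3-\tau)\bigl[j-\binom{j}{2}/(\tau-1)\bigr]$, whose continuous maximizer is $j=\tau-\tfrac12\in(3/2,5/2)$, so the integer optimum is $j=2$ with value $(3-\tau)(2\tau-3)/(\tau-1)$. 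The main obstacle is promoting this to the full continuous optimization: I would show via a KKT/exchange argument that at any optimum each $a_i$ sits at one of the two critical scales and that the maximality constraint is active exactly at the hub scale, so that the localized configuration is the strict maximizer and every other region of weight space contributes a strictly smaller power of $n$.

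Granting the optimization, I would finish as follows. Since the gap between $M_n(\varepsilon_n)$ and $M_n(\varepsilon)$ for fixed $\varepsilon$ lives only in the constant prefactor (the rescaled integrals over $[\varepsilon,1/\varepsilon]$), a dominated-convergence argument — using convergence of $\int_0^\infty x^{-\tau}(\cdots)\,\dd x$ and $\int_0^\infty y^{-\tau}(\cdots)\,\dd y$, which hold because $k>\tau$ and $\tau<3$ — shows $\Exp{N(K_k,M_n(\varepsilon_n)^c)}=o(n^{(3-\tau)(2\tau-3)/(\tau-1)})$, hence $N(K_k,M_n(\varepsilon_n)^c)=\op(n^{(3-\tau)(2\tau-3)/(\tau-1)})$ by Markov. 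For the denominator I would prove a matching lower bound in probability for the fixed-$\varepsilon$ count through a second-moment estimate, aiming at $\Exp{N(K_k,M_n(\varepsilon))^2}=(1+o(1))\Exp{N(K_k,M_n(\varepsilon))}^2$; the delicate point here is that maximality couples cliques through shared potential blockers, but pairs of cliques sharing at least one vertex contribute a strictly smaller power of $n$ and the disjoint pairs factorize, so Paley--Zygmund yields $N(K_k,M_n(\varepsilon))\ge c\,n^{(3-\tau)(2\tau-3)/(\tau-1)}$ with high probability. Combining these, and using $N(K_k,M_n(\varepsilon))\le N(K_k,M_n(\varepsilon_n))\le N(K_k)$ for $\varepsilon_n\le\varepsilon$, the ratio $N(K_k,M_n(\varepsilon_n))/N(K_k)=1-N(K_k,M_n(\varepsilon_n)^c)/N(K_k)$ is squeezed between $1-\op(1)$ and $1$, which is the desired convergence in probability.
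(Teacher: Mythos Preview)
Your plan for part~(ii) is essentially the paper's computation: rescale the two low weights by $n^{(\tau-2)/(\tau-1)}$ and the $k-2$ hub weights by $n^{1/(\tau-1)}$, observe that the clique and maximality probabilities become $\Theta(1)$, and read off the exponent. No issues there.

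For part~(i), your route diverges from the paper's in an instructive way. You propose to show that the localized configuration is the unique maximizer of the first-moment exponent by optimizing $E(\mathbf a)$ over all log-weight profiles $a_i$, subject to a blocking-feasibility constraint, and you flag the continuous KKT/exchange step as the main obstacle. The paper bypasses this optimization entirely by first proving a pointwise two-sided bound on the maximality probability of the form
\[
\exp\bigl(-C\,n^{2-\tau}x_{(1)}x_{(2)}^{\tau-2}\bigr),
\]
depending only on the \emph{two smallest} weights $x_{(1)}\le x_{(2)}$ (their Lemma on the clique being maximal). This collapses the $k$-dimensional optimization to a two-variable integral; after the same rescaling as in part~(ii), the full unrestricted integral is shown to be finite (their technical finiteness lemma), and localization then follows directly by dominated convergence, without ever classifying interior critical points. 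That is cleaner and avoids the KKT step you identify as delicate. Your discrete computation $E(j)=(3-\tau)[j-\binom{j}{2}/(\tau-1)]$ recovers the right answer at the boundary, but extending it to arbitrary $\mathbf a$ is genuinely harder than the paper's approach.

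One point where your proposal is in fact more careful than the paper: you correctly insist on a second-moment (Paley--Zygmund) argument to get $N(K_k,M_n(\varepsilon))\ge c\,n^{(3-\tau)(2\tau-3)/(\tau-1)}$ with high probability, which is what the ratio statement needs in the denominator. The paper moves from $\Exp{N(K_k,M_n(\varepsilon))}=\Theta(\cdot)$ to $N(K_k)=\Theta_{\mathbb P}(\cdot)$ without making this step explicit; your outline of the variance computation (disjoint pairs factorize, overlapping pairs drop a power of $n$) is the right way to fill that in.
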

	Theorem~\ref{thm:maxcliqeslocalized}(i) states that asymptotically all maximal $k$-cliques are formed between two vertices of weights proportional to $n^{(\tau-2)/(\tau-1)}$ and all other vertices of weights proportional to $n^{1/(\tau-1)}$. 
 Theorem~\ref{thm:maxcliqeslocalized}(ii) then shows that there are proportional to $n^{(3-\tau)(2\tau-3)/(\tau-1)}$ such maximal $k$-cliques.
        As visualized in Figure~\ref{fig:madfunct}, this scaling is significantly smaller than the scaling of the total number of $k$-cliques, which scales as $n^{k/2(3-\tau)}$~\cite{hofstad2017d}. Interestingly, the scaling of the number of maximal cliques is $k$-independent, contrary to the total number of cliques.  In particular, the number of $k$ maximal cliques is always $o(n)$, contrary to the number of $k$-cliques which scales larger than $n$ when $\tau<3-2/k$. This shows once more that the large number of maximal cliques in the IRG is caused by extremely large maximal cliques, as fixed-size maximal cliques are only linearly many. 

 To prove this theorem, we need the following technical lemma, which is proven in Appendix~\ref{app:proofintfinite}:
 \begin{lemma}\label{lem:intfinite}
 When $\tau\in(2,3)$, then
		\begin{equation}\label{eq:intmaxclique}
			\int_0^1\dots\int_0^1 x_3^{1-\tau}\cdots x_k^{1-\tau} \int_0^\infty\int_{x_1}^\infty x_1^{k-1-\tau}x_2^{1-\tau}\prod_{i=3}^k\min\Big(x_2x_i,1\Big)e^{-\mu^{1-\tau}x_1x_2^{\tau-2}}dx_2 dx_1\dots dx_k<\infty.
		\end{equation}
	\end{lemma}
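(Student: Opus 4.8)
The plan is to exploit the product structure of the integrand. For each $i\in\{3,\dots,k\}$ the variable $x_i$ appears only through the factor $x_i^{1-\tau}\min(x_2x_i,1)$, with $x_2$ playing the role of a shared parameter, while $x_1^{k-1-\tau}$, $x_2^{1-\tau}$ and the exponential do not involve $x_3,\dots,x_k$ at all. Since the whole integrand is nonnegative, Tonelli's theorem lets me integrate out $x_3,\dots,x_k$ first and factorize, rewriting the quantity in~\eqref{eq:intmaxclique} as
\begin{equation*}
\int_0^\infty\!\!\int_{x_1}^\infty x_1^{k-1-\tau}\, x_2^{1-\tau}\, I(x_2)^{k-2}\, e^{-\mu^{1-\tau}x_1x_2^{\tau-2}}\,dx_2\,dx_1, \qquad I(x_2):=\int_0^1 y^{1-\tau}\min(x_2y,1)\,dy.
\end{equation*}

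Next I would evaluate $I(x_2)$ by splitting the $y$-range at $y=1/x_2$. For $x_2\le 1$ one has $\min(x_2y,1)=x_2y$ throughout, giving $I(x_2)=x_2/(3-\tau)$; the point here is that the factor $\min(x_2y,1)$ converts the non-integrable singularity $y^{1-\tau}$ into the integrable $y^{2-\tau}$, using $2-\tau>-1$. For $x_2>1$ the split yields $I(x_2)=\Theta(x_2^{\tau-2})$. This produces the uniform estimate $I(x_2)\le C\min(x_2,x_2^{\tau-2})$ for a constant $C$.

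I would then reverse the order of the two remaining integrations (again by nonnegativity) over the region $0<x_1<x_2<\infty$, fixing $x_2$ and integrating $x_1$ over $(0,x_2)$. Bounding $\int_0^{x_2}\le\int_0^\infty$ and writing $\lambda:=\mu^{1-\tau}x_2^{\tau-2}$, the inner integral is dominated by the Gamma integral $\int_0^\infty x_1^{k-1-\tau}e^{-\lambda x_1}\,dx_1=\Gamma(k-\tau)\,\lambda^{-(k-\tau)}$, which is finite exactly because $k-1-\tau>-1$, i.e.\ $k>\tau$ (valid since $k\ge 3>\tau$). Substituting this back contributes a factor $x_2^{-(\tau-2)(k-\tau)}$ and leaves a single integral in $x_2$ whose integrand behaves like $x_2^{E_0}$ as $x_2\to 0$ and like $x_2^{E_\infty}$ as $x_2\to\infty$.

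The final step is to check the two exponent conditions using the small- and large-$x_2$ forms of $I(x_2)^{k-2}$. I expect the algebra to collapse into the clean identities $E_0=-1+(k-\tau)(3-\tau)$ and $E_\infty=-1+(\tau-2)(\tau-3)$. Since $\tau<3\le k$ gives $(k-\tau)(3-\tau)>0$, we get $E_0>-1$ and hence integrability at $0$; since $\tau\in(2,3)$ gives $(\tau-2)(\tau-3)<0$, we get $E_\infty<-1$ and hence integrability at $\infty$. The main obstacle is purely the bookkeeping across these three reductions: one must track the exponents carefully and match the two pieces of $I(x_2)$ to the two limiting regimes so that these products emerge, after which finiteness follows immediately from $\tau\in(2,3)$ and $k>\tau$.
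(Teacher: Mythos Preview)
Your argument is correct. Factoring the $x_3,\dots,x_k$ integrals into the single function $I(x_2)$ via Tonelli works exactly as you describe, the Gamma-integral bound on the $x_1$ integral is valid because $k-1-\tau>-1$, and the exponent computations $E_0=-1+(k-\tau)(3-\tau)$ and $E_\infty=-1+(\tau-2)(\tau-3)$ check out (the latter equals $1-\tau+(\tau-2)^2$, which you can verify directly).

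Your route differs substantially from the paper's. The paper does not exploit the product structure in $x_3,\dots,x_k$; instead it keeps all variables in play, splits on $x_2\le 1$ versus $x_2>1$, and on the $x_2>1$ piece replaces the exponential by the indicator $\mathbb{1}\{x_1x_2^{\tau-2}<1\}$ rather than evaluating a Gamma integral. It then orders $x_3>x_4>\cdots>x_k$, expands $\prod_{i\ge 3}\min(x_2x_i,1)$ piecewise over the nested intervals $[1/x_{j+1},1/x_j]$, and integrates the resulting monomials one variable at a time, tracking exponents inductively. What your approach buys is a clean reduction to a one-variable integral with transparent exponent conditions, and the Gamma bound handles the exponential factor sharply in one line. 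What the paper's approach buys is that it never introduces $I(x_2)$ as an auxiliary object and stays closer to the combinatorial structure used elsewhere in Section~\ref{sec:IRG}; the price is more casework and a somewhat informal passage from the exponential to the indicator.
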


Furthermore, we need a lemma that bounds the probability that a given clique on vertices of weights $x_1\leq x_2\dots\leq x_k$ is maximal:
	\begin{lemma}\label{lem:pmaximal}
		Let $G$ be an IRG with $\tau \in (2, 3)$ and $\mu>0$. Then, the probability that a given clique between $k$ vertices of weights $x_1\leq x_2\dots\leq x_k$ is maximal is bounded by
			\begin{align}
			 &\quad  \exp\Big(-C_1n^{2-\tau}\mu^{1-\tau}x_{1}x_{2}^{\tau-2}\Big)(1+o(1)) \leq \Prob{\text{clique on weights }x_1,\dots,x_k\text{ maximal}}\nonumber\\
    & \leq \exp\Big(-C_2n^{2-\tau}\mu^{1-\tau}x_{1}x_{2}^{\tau-2}\Big),
		\end{align}
	for some $0<C_1\leq C_2<\infty$.
	\end{lemma}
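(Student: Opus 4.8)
The plan is to reduce the maximality event to a single one-vertex quantity and then estimate that quantity by a case analysis of the $\min$ in the connection probability~\eqref{eq:conprob}. Fix the weights $x_1 \le \dots \le x_k$ and condition on $C$ being a clique; since the internal edges of $C$ are independent of the edges leaving $C$, this conditioning does not affect the external connections. For a single vertex $v \notin C$ whose weight $W$ is drawn from~\eqref{eq:pl}, the probability that $v$ is adjacent to all of $C$ is
\begin{equation*}
  q := \Exp{\prod_{i=1}^{k} \min\Big(\frac{W x_i}{\mu n},\, 1\Big)}.
\end{equation*}
The $n-k$ outside vertices are independent, so $C$ is maximal exactly when none of them is adjacent to all of $C$, giving $\Prob{C \text{ maximal}} = (1-q)^{n-k}$. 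Everything then reduces to the two-sided estimate $q = \Theta\big((\mu n)^{1-\tau} x_1 x_2^{\tau-2}\big)$, which I would convert using the elementary inequalities $1-q \le e^{-q}$ and $1-q \ge e^{-q/(1-q)}$.

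For the estimate on $q$, I would write $t_i = \mu n / x_i$, so that $\min(W x_i/(\mu n),1)=1$ iff $W \ge t_i$, with $t_1 \ge t_2 \ge \dots \ge t_k > 1$ (the weights are at most of order $n^{1/(\tau-1)} \ll \mu n$). For the \emph{upper} bound I discard the factors $i \ge 3$, each of which is at most $1$, and evaluate the two-factor expectation $\Exp{\min(\tfrac{W x_1}{\mu n},1)\min(\tfrac{W x_2}{\mu n},1)}$ by splitting the integral against the density $(\tau-1)w^{-\tau}$ at $t_2$ and $t_1$. The contributions are of order $(\mu n)^{1-\tau} x_1 x_2^{\tau-2}$ on $[1,t_2)$ and on $[t_2,t_1)$, and exactly $(\mu n)^{1-\tau} x_1^{\tau-1}$ on $[t_1,\infty)$; since $x_1^{\tau-1} \le x_1 x_2^{\tau-2}$, all three are $O\big((\mu n)^{1-\tau} x_1 x_2^{\tau-2}\big)$, giving $q \le c_1 (\mu n)^{1-\tau} x_1 x_2^{\tau-2}$. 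For the \emph{lower} bound I keep only the ranges $W \ge t_2$, where by the weight ordering every factor with $i \ge 2$ equals $1$, so the integrand is just $\min(W x_1/(\mu n),1)$: the range $[t_2,t_1)$ contributes $\tfrac{\tau-1}{\tau-2}(\mu n)^{1-\tau}(x_1 x_2^{\tau-2} - x_1^{\tau-1})$ and $[t_1,\infty)$ contributes $(\mu n)^{1-\tau} x_1^{\tau-1}$. Summing and again using $x_1^{\tau-1} \le x_1 x_2^{\tau-2}$ collapses the $x_1^{\tau-1}$ terms and leaves $q \ge (\mu n)^{1-\tau} x_1 x_2^{\tau-2}$.

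Finally I would plug the two-sided bound into $(1-q)^{n-k}$. Because the relevant weights keep $q \to 0$ (one checks $x_1 x_2^{\tau-2} \le x_k^{\tau-1} = O(n)$, so $q = O(n^{2-\tau}) \to 0$) and $k$ is a fixed constant, we have $(n-k)(\mu n)^{1-\tau} = \mu^{1-\tau} n^{2-\tau}(1 - o(1))$. The lower bound $q \ge (\mu n)^{1-\tau} x_1 x_2^{\tau-2}$ with $1-q \le e^{-q}$ then yields the claimed upper bound on $\Prob{C \text{ maximal}}$ for a suitable $C_2 > 0$, while the upper bound $q \le c_1(\mu n)^{1-\tau} x_1 x_2^{\tau-2}$ with $1-q \ge e^{-q/(1-q)}$ and $1/(1-q) = 1+o(1)$ yields the claimed lower bound with $C_1 = c_1(1+o(1)) \ge C_2$, the $(1+o(1))$ absorbing both the $q/(1-q)$ correction and $n-k \sim n$.

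The conceptual crux, and the step I expect to be the main obstacle, is the computation of $q$: one must recognize that the bottleneck for an outside vertex joining $C$ is connecting to the two \emph{lightest} members $x_1,x_2$, so the heavy members $x_3,\dots,x_k$ contribute a factor $1$ at leading order and drop out. One must also verify that the competing $x_1^{\tau-1}$ contributions are genuinely subdominant — they only become comparable to the target $x_1 x_2^{\tau-2}$ when $x_1$ and $x_2$ are of the same order, and precisely there the clean cancellation above still delivers the stated bound. The remaining exponent bookkeeping is routine.
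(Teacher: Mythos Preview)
Your proposal is correct and follows essentially the same route as the paper: both compute the one-vertex attachment probability $q$, show $q=\Theta\big((\mu n)^{1-\tau}x_1x_2^{\tau-2}\big)$, and then convert $(1-q)^{n-k}$ via the same exponential inequalities. The only cosmetic difference is in how the $\Theta$-estimate on $q$ is obtained: the paper evaluates the full integral by splitting at all thresholds $\mu n/x_1,\dots,\mu n/x_k$, producing a sum of $k$ terms and then arguing via a ratio test that the $l=2$ term $(\mu n)^{1-\tau}x_1x_2^{\tau-2}$ dominates, whereas you shortcut this by dropping the factors $i\ge 3$ (each $\le 1$) for the upper bound and restricting to $W\ge \mu n/x_2$ (where those factors equal $1$) for the lower bound---a slightly more direct way to reach the same two-sided estimate.
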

\begin{proof}
	When $x_{1}\leq x_2\leq \dots \leq x_k$, we can compute the probability that this $k$ clique is part of a larger clique with a randomly chosen vertex as
	\begin{align}
		& \int_1^{\infty}w^{-\tau}\prod_{i\in[k]}\min\Big(\frac{w x_{i}}{\mu n},1\Big)dw \nonumber\\
		& = \frac{x_{1}\dots x_k}{(\mu n)^k}\int_1^{\mu n/ x_k}w^{k-\tau}dw + \frac{x_1\dots x_{{k-1}}}{(\mu n)^{k-1}}\int_{\mu n/ x_k}^{\mu n/ x_{{k-1}}}w^{k-1-\tau}dw\nonumber\\
		& \quad + \dots + \frac{x_1x_{{2}}}{(\mu n)^{2}}\int_{\mu n/ x_{3}}^{\mu n/ x_{{2}}}w^{2-\tau}dw + \frac{x_1}{\mu n}\int_{\mu n/ x_2}^{\mu n/ x_{{1}}}w^{1-\tau}dw+ \int_{\mu n/ x_1}^{\infty}w^{-\tau}dw\nonumber\\
		& =  c_k\frac{x_1\dots x_k}{(\mu n)^k}\Big(\frac{\mu n}{x_k}\Big)^{k+1-\tau}+ \dots 
		+  c_2\frac{x_1}{\mu n}\Big(\frac{\mu n}{x_2}\Big)^{2-\tau}+c_1\Big(\frac{\mu n}{x_1}\Big)^{1-\tau},
	\end{align}
	for some $c_1,\dots, c_k>0.$
	When $x_1\leq x_2\leq \dots\leq  x_k$, this term becomes
	\begin{equation}
		(\mu n)^{1-\tau}\sum_{l=1}^kc_lx_{l}^{-l+\tau}\prod_{i<l}x_{i}.
	\end{equation}
	The ratio between two consecutive terms of this summation equals
	\begin{equation}
		\frac{x_{l}^{\tau-l}x_1\dots x_{{l-1}}}{x_{{l+1}}^{\tau-l-1}x_1\dots x_{{l}}}=\Big(\frac{x_{l}}{x_{{l+1}}}\Big)^{\tau-l-1}.
	\end{equation}
	Now as $x_{l}\leq x_{{l+1}}$ and $\tau\in(2,3)$, this ratio is larger than 1 for $l\geq 2$, and smaller than one for $l=1$. This means that the summation can be dominated by 
	\begin{equation}\label{eq:ubsum}
		(\mu n)^{1-\tau}\sum_{l=1}^kc_lx_{l}^{-l+\tau}\prod_{i<l}x_{i}\leq C (\mu n)^{1-\tau}x_1x_2^{\tau-2},
	\end{equation}
	for some $C>0$. 
	
	Thus, the probability that a clique on vertices with weights $x_1,\dots, x_k$ is maximal can be upper bounded by
	\begin{align}
		\Prob{(x_1,\dots,x_k) \text{ clique maximal }} & \leq \Big(1-C (\mu n)^{1-\tau}x_1x_2^{\tau-2}\Big)^{n}\nonumber\\
		& \leq  \exp\Big(-Cn^{2-\tau}\mu^{1-\tau}x_1x_2^{\tau-2}\Big).
	\end{align}

We lower bound the probability that the clique is maximal by using that 
\begin{equation}
	(\mu n)^{1-\tau}\sum_{l=1}^kc_lx_{l}^{-l+\tau}\prod_{i<l}x_{i}\geq c_2 (\mu n)^{1-\tau}x_1x_2^{\tau-2}.
\end{equation}
Thus,
	\begin{align}
	\Prob{(x_1,\dots,x_k) \text{ clique maximal }} & \geq \Big(1-c_2 (\mu n)^{1-\tau}x_1x_2^{\tau-2}\Big)^{n}\nonumber\\
	&  \geq  \exp\Big(-c_2n^{2-\tau}\mu^{1-\tau}x_1x_2^{\tau-2}/(1+c_2n^{1-\tau}\mu^{1-\tau}x_1x_2^{\tau-2})\Big)\nonumber\\
	& = \exp\Big(-c_2n^{2-\tau}\mu^{1-\tau}x_1x_2^{\tau-2}\Big)(1+o(1)).
\end{align}
\end{proof}

	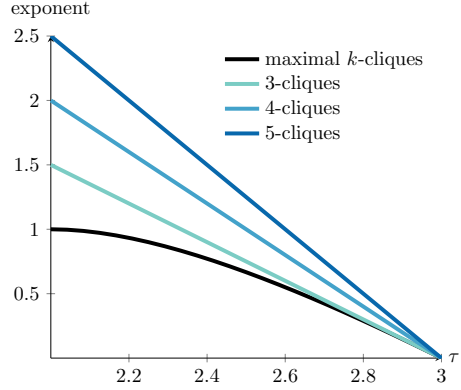
\begin{figure}
		\centering
  \scalebox{0.75}{
		\begin{tikzpicture}
			\definecolor{mycolor1}{rgb}{0.48235,0.80000,0.76863}%
			\definecolor{mycolor2}{rgb}{0.26275,0.63529,0.79216}%
			\definecolor{mycolor3}{rgb}{0.03137,0.40784,0.67451}%
			\begin{axis}[
				axis lines=center,
every axis x label/.style={at={(current axis.right of origin)},anchor=west},
every axis y label/.style={at={(current axis.north west)},above=2mm},
				xlabel=$\tau$,
    ylabel = exponent, 
    legend style={legend cell align=left, align=left, draw=none}]
				
				\addplot [
				domain=2:3, line width=2.0 pt, 
				] {(3-x)*(2*x-3)/(x-1)};
				\addlegendentry{maximal $k$-cliques};
				\addplot [
				domain=2:3, line width=2.0 pt,  color=mycolor1
				] {3/2*(3-x)}; 
				\addlegendentry{3-cliques};
				\addplot [
				domain=2:3, line width=2.0 pt, color=mycolor2
				] {4/2*(3-x)}; 
				\addlegendentry{4-cliques};
				\addplot [
				domain=2:3, line width=2.0 pt, color=mycolor3
				] {5/2*(3-x)}; 
				\addlegendentry{5-cliques};
			\end{axis}
		\end{tikzpicture}}
		\caption{Scaling of the number of maximal $k$-cliques, and the total number of (not necessarily maximal) 3,4,5-cliques.}
		\label{fig:madfunct}
	\end{figure}
	


 Now we are ready to prove Theorem~\ref{thm:maxcliqeslocalized}:

	\begin{proof}[Proof of Theorem~\ref{thm:maxcliqeslocalized}]
		Fix $\ell_i\leq u_i$ for $i\in[k]$. We now compute the expected number of maximal $k$-cliques in which the vertices have weights $n^{(\tau-2)/(\tau-1)}[\ell_i,u_i]$ for $i=1,2$, and $n^{1/(\tau-1)}[\ell_i,u_i]$ for $i\geq 3$.
%

		We bound the expected number of such maximal copies of $K_k$ by
		\begin{equation}
			\label{eq:Exp1small}
			\begin{aligned}[b]
				&\sum_{\boldsymbol{v}}\Exp{I(K_k, \boldsymbol{v})\ind{w_{v_i}\in [\ell_i,u_i]n^{(\tau-2)/(\tau-1)}, \ i =1,2, \ w_{v_i}\in [\ell_i,u_i] n^{1/(\tau-1)},\  i \geq 3 }}\\
				& = n^k\int_{\ell_1 n^{(\tau-2)/(\tau-1)}}^{u_1 n^{(\tau-2)/(\tau-1)}}\int_{\ell_2 n^{(\tau-2)/(\tau-1)}}^{u_2 n^{(\tau-2)/(\tau-1)}}\cdots \int_{\ell_k n^{1/(\tau-1)}}^{u_k n^{1/(\tau-1)}}(x_1\cdots x_k)^{-\tau}
				\prod_{\mathclap{1\leq i<j\leq k}}\min\left(\frac{x_ix_j}{n},1\right) \nonumber\\
    & \quad \cdot \Prob{(x_1,\dots,x_k) \text{ clique maximal}}\dd x_k\cdots\dd x_1,
			\end{aligned}
		\end{equation}
		where $I(K_k, \boldsymbol{v})$ is the indicator that a maximal $k$-clique is present on vertices $\boldsymbol{v}$, and the sum over $\boldsymbol{v}$ is over all possible sets of $k$ vertices.  Now the probability that a clique is maximal can be upper bounded as in Lemma~\ref{lem:pmaximal}.

		We bound the minimum in~\eqref{eq:Exp1small} by 
		\begin{itemize}
			\item[(a)] $x_ix_j/n$ for $\{i,j\}=\{1,2\}$ or $i=1,j\geq 3$;
			\item[(b)] 1 for $i,j\geq 3$ .
		\end{itemize}

		Making the change of variables $x_i=y_in^{1/(\tau-1)}$ for $i=3,\dots,k$ and $x_i=y_i/n^{(\tau-2)/(\tau-1)}$ otherwise, we obtain
		the bound
		\begin{align}\label{eq:expnhsmall}
				&\sum_{\boldsymbol{v}}\Exp{I(K_k, \boldsymbol{v})\ind{w_{v_i}\in [\ell_i,u_i]n^{(\tau-2)/(\tau-1)}, \ i =1,2, \ w_{v_i}\in [\ell_i,u_i] n^{1/(\tau-1)}, \ i \geq 3 }}\nonumber\\
				& \leq \tilde{K} n^{k}n^{2(\tau-2)/(\tau-1)-k+1}\nonumber\\
				& \times
				\int_{\ell_1}^{u_1}\int_{y_1}^{u_2}\int^{u_3}_{\ell_3}\cdots \int^{u_k}_{\ell_k}y_1^{2-\tau}y_2^{1-\tau}y_3^{1-\tau}\dots y_k^{1-\tau}
				\prod_{j\geq 3}\min(y_2y_j,1)\exp(-\mu^{1-\tau}y_1y_2^{\tau-2})\dd y_{k}\cdots \dd y_{1} ,
		\end{align}
  for some $\tilde{K}>0$.
  Because the weights are sampled i.i.d. from a power-law distribution, the maximal weight $w_{\max}$ satisfies that for any $\eta_n\to 0$,  $w_{\max}\leq n^{1/(\tau-1)}/\eta_n$ with high probability. 
	Thus, we may assume that $u_i\leq 1/\eta_n$ when $i\geq 3$.
	Now suppose that at least one vertex has weight smaller than $\varepsilon_n n^{(\tau-2)/(\tau-1)}$ for $i=1,2$ or smaller than $\varepsilon_n n^{1/(\tau-1)}$ for $i\geq 3$. This corresponds to taking $u_i=\varepsilon_n$ and $\ell_i=0$ for at least one $i$, or at least one integral in~\eqref{eq:expnhsmall} with interval $[0,\varepsilon_n]$. Similarly, when vertex 1 or 2 has weight higher than $1/\varepsilon_n n^{(\tau-2)/(\tau-1)}$, this corresponds to taking $\ell_i=1/\varepsilon_n$ and $u_i=\infty$ for $i=1$ or 2, or at least one integral in~\eqref{eq:expnhsmall} with interval $[1/\varepsilon_n,\infty]$.
	Lemma~\ref{lem:intfinite} then shows that these integrals tends to zero when choosing $u_i=\eta_n$ fixed for $i\geq 3$ and $\varepsilon_n\to 0$.
	 Thus, choosing $\eta_n\to 0$ sufficiently slowly compared to $\varepsilon_n$ yields that
		\begin{align}\label{eq:expcontrsmall}
			\sum_{\boldsymbol{v}}\Exp{I(K_k, \boldsymbol{v})\ind{\boldsymbol{v}\notin \Gamma_n(\varepsilon_n,\eta_n)}} \in o((n^{(3-\tau)(2\tau-3)/(\tau-1)}),
		\end{align}
		where
		\begin{equation}
			\Gamma_n(\varepsilon_n,\eta_n) = \{(v_1,\dots,v_k)\colon w_{v_i}\in n^{(\tau-2)/(\tau-1)}[\varepsilon_n,1/\varepsilon_n], i=1,2\ n^{1/(\tau-1)}[\varepsilon_n,1/\eta_n]\}.
		\end{equation}	
	
		Let $\bar{\Gamma}_n(\varepsilon_n,\eta_n)$ be the complement of $\Gamma_n(\varepsilon_n,\eta_n)$. Denote the number of maximal cliques with vertices in $\bar{\Gamma}_n(\varepsilon_n,\eta_n)$ by $N(K_k,\bar{\Gamma}_n(\varepsilon_n,\eta_n))$. Since $w_{\max}\leq n^{1/(\tau-1)}/\eta_n$ with high probability, $\Gamma_n(\varepsilon_n,\eta_n)=M_n(\varepsilon_n)$ with high probability. Therefore, with high probability,
		\begin{equation}
			N\Big(K_k,\bar{M}_n\left(\varepsilon_n\right)\Big) = N\Big(K_k,\bar{\Gamma}_n(\varepsilon_n,\eta_n)\Big),
		\end{equation}
		where
                $N\big(K_k,\bar{M}_n\left(\varepsilon_n\right)\big)$
                denotes the number of maximal $k$-cliques on vertices
                not in $M_n\left(\varepsilon_n\right)$.
                By~\eqref{eq:expcontrsmall} and the Markov inequality,
                we have for all $\epsilon > 0$
		\begin{equation}
			\lim_{n \to \infty} \Prob{\left|\frac{N\Big(K_k,\bar{\Gamma}_n(\varepsilon_n,\eta_n)\Big)}{n^{(3-\tau)(2\tau-3)/(\tau-1)}}\right| > \epsilon} = 0.
		\end{equation}

		Furthermore, Lemma~\ref{lem:intfinite} combined with the lower bound in~\eqref{eq:expnhsmall} shows that when choosing $u_i=1/\varepsilon$ and $\ell_i=\varepsilon$ for  some fixed $\varepsilon>0$ for all $i$,
		\begin{align}
			\Exp{N(K_k,M_n(\varepsilon))}\in \Theta(n^{(3-\tau)(2\tau-3)/(\tau-1)}).
		\end{align}
		Thus, for fixed $\varepsilon>0$, 
		\begin{align} 
			N(K_k)&= N(K_k,M_n(\varepsilon))+N(K_k,\bar{M}_n(\varepsilon))=\Theta_p(n^{(3-\tau)(2\tau-3)/(\tau-1)}),
		\end{align}
  which
		shows that
		\begin{equation}
			\frac{N\big(K_k,M_n\left(\varepsilon_n\right)\big)}{N(K_k)}\plim 1,
		\end{equation}
		as required. This completes the proof of Theorem~\ref{thm:maxcliqeslocalized}.
	\end{proof}

	\section{Experiments}
	\label{sec:experiments}
	
	As mentioned in the introduction, empirical evidence suggests that
	the number of maximal cliques in IRGs and GIRGs is
	small~\cite{Exter_Valid_Avera_Analy_ESA2022}.  In fact, all
	generated networks with $n = \SI{50}{k}$ nodes and expected average
	degree $10$ have fewer maximal cliques than edges.  This stands in
	stark contrast to our super-polynomial lower bounds.  This
	discrepancy probably comes from the fact that $n = \SI{50}{k}$ is
	low enough that a linear lower-order term dominates the
	super-polynomial terms.  In this section, we complement our
	theoretical lower bounds with experiments\footnote{The corresponding
	code is available at:
	\url{https://github.com/thobl/maximal-cliques-scale-free-rand-graph}}
	with an $n$ that is sufficiently large to make the super-polynomial
	terms dominant.  Additionally, we consider dense and super-dense
	Erdős--Rényi graphs.
	
	\subsection{Cliques in the Dense Subgraph of GIRGs and IRGs}
	\label{sec:girgs-irgs}
	
	Our theoretical lower bounds are based on the existence of a dense
	subgraph among the vertices with weights $\Theta(\sqrt{n})$.  To
	experimentally observe the super-polynomial scaling, we generate IRGs
	and GIRGs restricted to vertices of high weight.  This restriction
	lets us consider much larger values of $n$.  In the following, we
	first describe the exact experiment setup, before describing and
	discussing the results.
	
	\paragraph{Experiment Setup.}
	
	We generate IRGs and GIRGs with varying number of vertices $n$ and
	deterministic power-law weights where the $v$th vertex has weight
	\begin{equation*}
		w_v = \left( \frac{n}{v} \right)^{\frac{1}{\tau - 1}}.
	\end{equation*}
	Note that the minimum weight is~$w_n = 1$.
	
	We use the power-law exponents $\tau \in \{2.2, 2.5, 2.8\}$ and for
	GIRGs we consider the temperatures $T \in \{0, 0.4, 0.8\}$ and
	dimension $d = 1$.  For each parameter setting, we consider two
	subgraphs: The subgraph induced by vertices with
	$0.5 \sqrt{n} \le w_i \le \sqrt{n}$ and within the larger interval
	$0.5 \sqrt{n} \le w_i \le n$.  In preliminary experiments, we also
	tried constant factors other than $0.5$, yielding comparable
	results.
	
	As connection probability for the IRGs between the $u$th and $v$th
	vertex, we use $\min\{1, w_u w_v / n\}$, i.e., vertices of
	weight $1$ have connection probability $1 / n$ and vertices of weight
	at least $\sqrt{n}$ are deterministically connected.  For GIRGs, we
	choose the constant factor $\mu$ in Equation~\eqref{eq:edgeprob} such that we obtain the same
	expected\footnote{We do not sample the positions before computing the
		expected average degree but we compute the expectation with respect
		to random positions.} average degree as for the corresponding IRG in
	the considered subgraph.
	For each of these configurations, we generate \num{10} graphs.
	Figure~\ref{fig:girg_irg_core_plot} shows the average.
	
	\begin{figure}[t]
		\centering
		\includegraphics{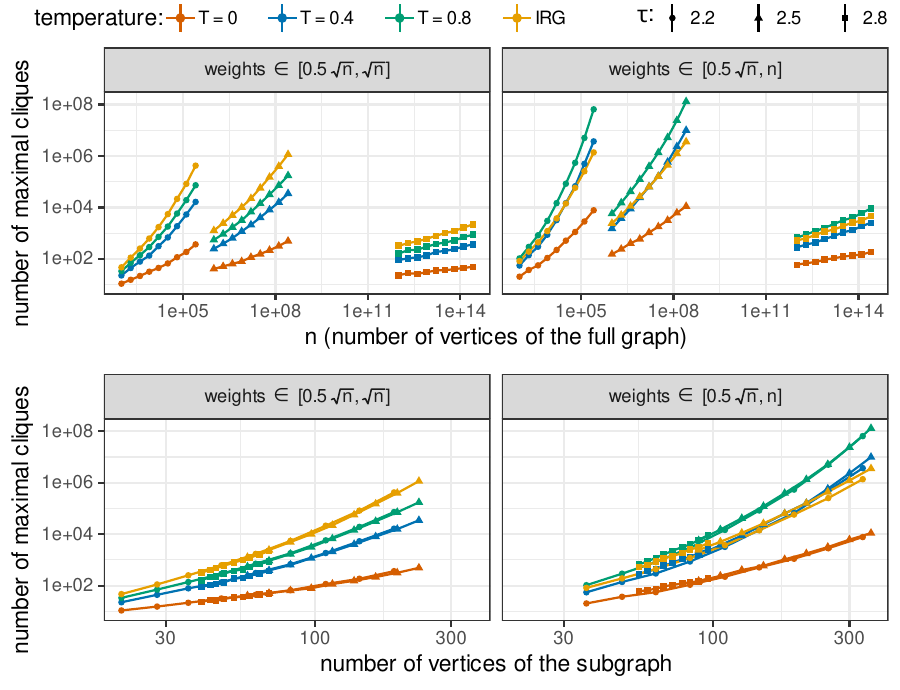}
		\caption{The number of maximal cliques of the dense subgraph of
			GIRGs and IRGs.  The considered subgraphs contain all vertices
			with weights in $[0.5 \sqrt{n}, \sqrt{n}]$ (left column) and
			$[0.5 \sqrt{n}, n]$ (right column).  The top and bottom plots show
			the number of cliques with respect to the size of the full graph,
			and with respect to the size of the considered subgraph,
			respectively.  All axes are logarithmic.  Each point is the
			average of \num{10} sampled graphs.}
		\label{fig:girg_irg_core_plot}
	\end{figure}
	
	\paragraph{General Observations.}
	
	One can clearly see in Figure~\ref{fig:girg_irg_core_plot} (top row)
	that the scaling of the number of cliques depending on the graph size
	is super-polynomial (upward curves in a plot with logarithmic axes).
	Thus, on the one hand, this agrees with our theoretical analysis.  On
	the other hand, the plots also explain why previous
	experiments~\cite{Exter_Valid_Avera_Analy_ESA2022} showed a small
	number of cliques: While the scaling is super-polynomial, the constant
	factors are quite low.  In the top-left plot for $\tau = 2.5$, more
	than \SI{200}{M} 
 nodes are necessary to get just barely above
	\SI{1}{M} maximal cliques in the dense subgraph.  For $\tau = 0.8$
	this is even more extreme with $n = \SI{200}{T}$ yielding only
	\SI{10}{k} maximal cliques.  Thus, unless we deal with huge graphs,
	the maximal cliques in the dense part of the graph are dominated by
	the number of cliques in the sparser parts, despite the
	super-polynomial growth of the former.
	
	\paragraph{Effect of the Power-Law Exponent $\tau$.}
	
	The top plots of Figure~\ref{fig:girg_irg_core_plot} show that a
	smaller power-law exponent $\tau$ leads to more maximal cliques.  The
	bottom plots show the number of cliques with respect to the size of
	the dense subgraph and not with respect to the size of the full graph.
	One can see that the difference for the different power-law exponents
	solely comes from the fact that the dense subgraph is larger for
	smaller $\tau$.  For the same size of the dense subgraph, the scaling
	is almost independent of the power-law exponent.
	
	\paragraph{Effect of the Geometry.}
	
	In the left plots of Figure~\ref{fig:girg_irg_core_plot}, we can see
	that geometry leads to fewer maximal cliques.  For $T = 0$, the
	super-polynomial scaling is only barely noticeable.  Higher
	temperatures lead to a larger number of cliques and we get even more
	cliques for IRGs.  Interestingly, the scaling is slower for IRGs when
	additionally considering the core of vertices with weight more than
	$\sqrt{n}$ (see next paragraph).
	
	\paragraph{Effect of the Core.}
	
	When not capping the weight at $\sqrt{n}$ but also considering
	vertices of even higher weight (right plots), we can observe the
	following.  The overall picture remains similar, with a slightly
	increased number of cliques.  However, this increase is higher for
	GIRGs than it is for IRGs.  A potential explanation for this is the
	following.  For IRGs, the core forms a clique and adding a large
	clique to the graph does not change the overall number of maximal
	cliques by too much.  For GIRGs, however, it depends on the constant
	$\mu$ controlling the average degree whether this subgraph forms a
	clique or not.  Thus, for the same average degree, the maximum clique
	is probably somewhat smaller for GIRGs and thus adding the vertices of
	weight at least $\sqrt{n}$ leads to more additional cliques than in
	IRGs.
	
	\subsection{Cliques in the Dense and Super-Dense Erdős--Rényi Graphs}
	\label{sec:cliq-dense-gnp}
	
	Here we count the cliques for dense Erdős--Rényi graphs with constant
	connection probabilities $p \in \{0.6, 0.7, 0.8, 0.9\}$ and
	super-dense Erdős--Rényi graphs with connection probability
	$p = 1 - c / n$ for $c \in \{1, 2, 4, 8\}$.  Note that the complement
	of a super-dense Erdős--Rényi graph has constant expected average
	degree.  The scaling of the number of cliques with respect to the
	number of vertices is shown in Figure~\ref{fig:dense_gnp_plot}, where
	each point represents \num{20} samples.
	
	\begin{figure}[t]
		\centering
		\includegraphics{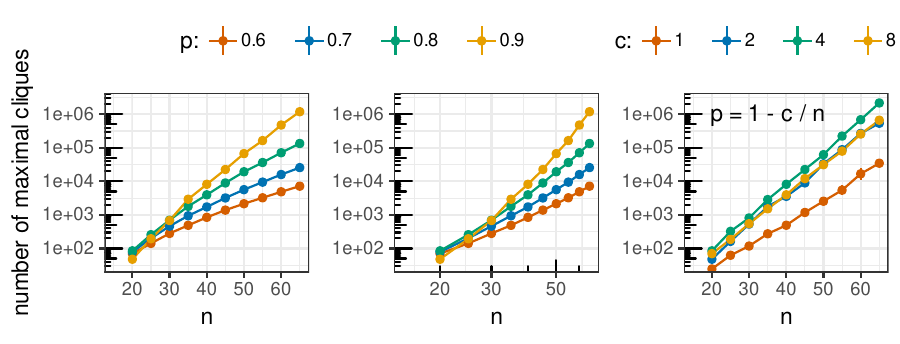}
		\caption{The number of maximal cliques in dense and super-dense
			$G(n, p)$s.  For the left and middle plot, $p$ is constant.  For
			the right plot, $p = 1 - c / n$ for constant $c$.  Note that the
			$y$-axes are logarithmic and the $x$-axis in the middle plot is
			logarithmic.  Each point is the average of \num{20} sampled
			graphs.}
		\label{fig:dense_gnp_plot}
	\end{figure}
	
	Note that for constant $p$, the left plot with logarithmic $y$-axis is
	curved downward, indicating sub-exponential scaling, while the middle
	plot with logarithmic $x$- and $y$-axis is bent upwards, indicating
	super-polynomial scaling.  This is in line with our lower bound in
	Theorem~\ref{thm:er-lower-bound}.
	
	For the super-dense case, the right plot indicates exponential
	scaling, in line with Theorem~\ref{thm:erdense}.

 \section{Conclusion and Discussion}\label{sec:conc}
In this paper, we have investigated the number of maximal cliques in three random graph models: the Erd\H{o}s--R\'enyi random graph, the inhomogeneous random graph and the geometric inhomogeneous random graph. We have shown that sparse  Erd\H{o}s--R\'enyi random graphs only contain a polynomial amount of maximal cliques, but in the other two sparse models, the number of maximal cliques scales at least super-polynomially in the network size. This is caused by the degree-heterogeneity in these models, as many large maximal cliques are present close to the core of these random graphs. We prove that there only exist a linear amount of small maximal cliques. Interestingly, these small maximal cliques are almost always formed by two low-degree vertices, whereas all other vertices are hubs of high degree. 

We have then shown that this dominant super-polynomial behavior of the number of maximal cliques often only kicks for extreme network sizes, and that experimentally, lower-order linear terms instead drive the scaling of the number of maximal cliques until large values of the network size. This explains the dichotomy between the theoretical super-polynomial lower bounds for these models, and the observation that in real-world networks, the amount of maximal cliques is often quite small. 

Several of our results only constitute lower bounds for the number of maximal cliques. We believe that relatively close upper bounds can be constructed in a similar fashion, but leave this open for further research. 

	While Theorem~\ref{thm:girg_non_torus} only holds for 2-norms, we believe that the theorem can be extended to any $L^p$-norm for $p\neq1,\infty$, by looking at the $L^p$ norm-cycle instead of the regular cycle. For $p=1,\infty$ this approach fails, shortest distance paths to non-opposing segments pass through the center of the cycle. Therefore, opposing segments are just as close as many non-opposing ones. Whether Theorem~\ref{thm:girg_non_torus} also holds with 1 or $\infty$ norms is therefore a question for further research.
	We also believe that this approach also extends to the underlying space $[0,1]^d$ for general $d$, where instead of looking at a cycle inside $[0,1]^2$, one studies a $d$-ball inscribed in $[0,1]^d$ instead.

 \bibliographystyle{abbrv}
	\bibliography{references}

	\appendix
	
	\section{Proof of Theorem~\ref{thmnonzero2dim}}\label{app:prooftemperature}

	\begin{lemma}\label{lem:chernoff-areas}
		Let $(A_i)_{i\in[k]}$ be a set of areas of size $A$, and let $S$ be a set of vertices, such that $A|S|>n^\varepsilon$ for some $\varepsilon>0$. Then, for any $0<\lambda<1$ and $k<\exp(\lambda A|S|)$, with high probability all areas contain at least $(1-\lambda)A|S|$ vertices.
	\end{lemma}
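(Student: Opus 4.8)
The plan is to treat each area independently and reduce the statement to a single lower-tail concentration inequality followed by a union bound over the $k$ areas. First I would observe that, since the vertex positions are sampled uniformly and independently from the ground space (which has total measure $1$) and each area $A_i$ has measure $A$, the number $X_i$ of vertices of $S$ that land in $A_i$ is a sum of $\abs{S}$ independent indicator variables, hence binomially distributed with parameters $\abs{S}$ and $A$. In particular $\Exp{X_i} = A\abs{S}$. The event to be controlled is that \emph{every} $A_i$ receives at least a $(1-\lambda)$-fraction of its expected occupancy, i.e.\ $X_i \ge (1-\lambda)A\abs{S}$ for all $i \in [k]$; here the hypothesis $A\abs{S} > n^\varepsilon$ guarantees that this expected occupancy grows at least polynomially in $n$.

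Next I would apply a multiplicative lower-tail Chernoff bound to a single area, exactly as in the bounds of \cite{bff-espsf-22} already invoked in the proof of Theorem~\ref{thm:girg-torus-co-matching}. This gives a per-area estimate of the form
\begin{equation*}
  \Prob{X_i < (1-\lambda)A\abs{S}} \le \exp\!\big(-c\,\lambda^2 A\abs{S}\big)
\end{equation*}
for an absolute constant $c>0$. A union bound over the $k$ areas then yields
\begin{equation*}
  \Prob{\exists\, i \in [k]\colon X_i < (1-\lambda)A\abs{S}} \le k\exp\!\big(-c\,\lambda^2 A\abs{S}\big).
\end{equation*}
It remains to show that the right-hand side tends to $0$. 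Using the assumption $k < \exp(\lambda A\abs{S})$ to bound $\log k$ linearly in $A\abs{S}$, the exponent is at most $(\lambda - c\lambda^2)A\abs{S}$, and since $A\abs{S} \ge n^\varepsilon \to \infty$, this suffices to drive the failure probability to zero once the sign of the exponent is controlled.

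I expect the only genuine subtlety, and hence the main obstacle, to be precisely this last balancing step: reconciling the rate $\lambda^2$ produced by the standard lower-tail Chernoff bound with the factor $\lambda$ appearing in the hypothesis $k < \exp(\lambda A\abs{S})$. For the generic constant-$\lambda$ regime in which the lemma is applied, this is harmless because $A\abs{S}$ is polynomially large in $n$ while $\log k$ is only of order $\lambda A\abs{S}$, so the concentration easily dominates; but to make the stated sufficient condition rigorous one must either track the Chernoff constant $c$ carefully or restrict to the regime where the polynomial growth of $A\abs{S}$ dominates $\log k$. Once that bookkeeping is settled, the union-bound estimate gives $\Prob{\text{some area is underfull}} \to 0$, i.e.\ with high probability every area contains at least $(1-\lambda)A\abs{S}$ vertices, as claimed.
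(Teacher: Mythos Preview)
Your approach is exactly the paper's: a lower-tail Chernoff bound for a single area followed by a union bound over the $k$ areas, using $A\abs{S} > n^\varepsilon$ to make the failure probability vanish. The paper is in fact briefer and less careful than you are---it simply writes $\Prob{N_{S,A} < (1-\lambda)A\abs{S}} \le \exp(-\lambda A\abs{S})$ without the $\lambda^2$ and without discussing the balancing issue you flag, so the subtlety you identify is real but is glossed over in the original as well.
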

	\begin{proof}
		The Chernoff bound gives for the number of vertices from $S$ within area $A$, $N_{S,A}$:
		\begin{equation}
			\Prob{N_{S,A} < (1-\lambda)A|S| }\leq \exp\Big(-\lambda A|S|\Big).
		\end{equation}
		This implies that when $A|S|>n^{\varepsilon}$ for some $\varepsilon>0$, then, with high probability, all areas contain at least $(1-\lambda)A|S|$ vertices.
	\end{proof}

	We follow the same construction of areas and sets as in the proof of Theorem~\ref{thm:girg_non_torus}. By~\eqref{eq:mbound} this creates $2k = s \cdot n^{\min(\beta/2,\gamma(n)/2)} $  areas of size $A=n^{-3/2\gamma(n)}$, with on average $\Exp{|S|}=n^{(3-\tau)/2-\beta}$ vertices. Thus, Lemma~\ref{lem:chernoff-areas} shows that as long as $\beta+3/2\gamma(n) <(3-\tau)/2$, then all areas contain with high probability at least $$n'= c_1n^{(3-\tau)/2-\beta-3/2\gamma(n)}$$ vertices for some $c_1>0$.  
	
	From~\eqref{eq:edgeprob}, it follows that any set of vertices that contains one in each given area still satisfies the requirement that all vertices in non-opposite boxes connect, as in non-opposite boxes, the connection probability equals 1 by~\eqref{eq:l2dimnotorus}. Now to form a co-matching, vertices in opposite boxes should not connect. 
	
	With high probability, a positive proportion of vertices in two opposing areas have distance at least $t+h=a+n^{-\gamma(n)}$, by the uniform distribution within areas, and the fact that a positive proportion of the two areas have distance $t+h$.
	
	By~\eqref{eq:edgeprob}, the probability that vertices $i,j\in S$ at distance at least $a+n^{-\gamma(n)}$ are connected is bounded by
	\begin{align}\label{eq:opposite_disconnect_prob}
		p_{ij}& \leq \min\Bigg(\Big(\frac{a^2(1-n^{-\beta})}{(a+n^{-\gamma(n)})^2}\Big)^{1/T},1\Bigg)\nonumber\\
		& =(1-n^{-\beta})(1-n^{-\gamma(n)})(1+o(1))\nonumber\\
		& =(1-\max(n^{-\beta},n^{-\gamma(n)}))(1+o(1)).
	\end{align}
 Similarly as in~\eqref{eq:pm0},
 \begin{equation}
     \Prob{M=0}\leq k (1-\max(n^{-\beta},n^{-\gamma(n)})^{(n')^2}\leq k\exp(-\max(n^{-\beta},n^{-\gamma(n)})(n')^2)
 \end{equation}
 Using that $n'= c_1n^{(3-\tau)/2-\beta-3/2\gamma(n)}$ therefore yields
 \begin{equation}
     \Prob{M=0}\leq k\exp(-c_1^2n^{(3-\tau)-\beta-3\gamma(n)}\max(n^{-\beta},n^{-\gamma(n)})).
 \end{equation}
 Thus, choosing $\beta=\gamma(n)=(3-\tau)/5-\varepsilon$ ensures that there is a co-matching of size $k=s \cdot n^{(3-\tau)/10-\varepsilon}$
	\qed

\section{Proof of Lemma~\ref{lem:intfinite}}\label{app:proofintfinite}
 	\begin{proof}
		
		This integral equals
		\begin{align}
			&\int_0^1\dots\int_0^1x_3^{1-\tau}\cdots x_k^{1-\tau} \int_0^1\int_0^{x_2} x_1^{k-1-\tau}x_2^{k-1-\tau}\exp\Big(-\mu^{1-\tau}x_1x_2^{\tau-2}\Big)dx_1 dx_2\dots dx_k\nonumber\\
			&
			+ 
			\int_0^1\dots\int_0^1 x_3^{2-\tau}\cdots x_k^{2-\tau}\int_1^{\infty}\int_0^{x_2} x_1^{k-1-\tau}x_2^{1-\tau}\prod_{i=3}^k\min\Big(x_2x_i,1\Big)\exp\Big(-\mu^{1-\tau}x_1x_2^{\tau-2}\Big)dx_1 dx_2\dots dx_k
		\end{align}
		Now the first integral is bounded by
		\begin{equation}
			\int_0^1\dots\int_0^1 x_3^{2-\tau}\cdots x_k^{2-\tau}\int_0^1\int_0^{x_2} x_1^{k-1-\tau}x_2^{k-1-\tau}dx_2dx_1\dots dx_k<\infty,
		\end{equation}
		as $2-\tau>-1$, and $k-1-\tau>-1$ for $k\geq 3$ as well. We now turn to the second integral. 
		The second integral is finite if
		\begin{align}
			&\int_0^1\dots\int_0^1 x_3^{1-\tau}\cdots x_k^{1-\tau}\int_1^{\infty} \int_0^{x_2}x_1^{k-1-\tau}x_2^{1-\tau}\prod_{i=3}^k\min\Big(x_2x_i,1\Big)\ind{x_2^{\tau-2}x_1<1}dx_1 dx_2\dots dx_k<\infty.
		\end{align}
		
		This results in 
		\begin{align}\label{eq:int2ind}
			& \int_0^1\dots\int_0^1 x_3^{1-\tau}\cdots x_k^{1-\tau} \int_1^{\infty} \int_0^{x_2^{2-\tau}}x_1^{k-1-\tau}x_2^{1-\tau}\prod_{i=3}^k\min\Big(x_2x_i,1\Big) dx_1 \dots dx_k\nonumber\\
			& = \int_0^1\dots\int_0^1 x_3^{1-\tau}\cdots x_k^{1-\tau}\int_1^{\infty}x_2^{(2-\tau)(k+1-\tau)-1}\prod_{i=3}^k\min\Big(x_2x_i,1\Big) dx_2\dots dx_k
		\end{align}
		W.l.o.g. we assume that $x_3>x_4>\dots>x_k$. Then, the inner integral evaluates to
		\begin{align}
			&  \int_1^{\infty}x_2^{(2-\tau)(k+1-\tau)-1}\prod_{i=3}^k\min\Big(x_2x_i,1\Big) dx_2\nonumber\\
			& = \int_1^{1/x_3}x_2^{(2-\tau)(k+1-\tau)+k-3}x_3\cdots x_k dx_2+ \dots +  \int_{1/x_k}^\infty x_2^{(2-\tau)(k+1-\tau)-1} dx_2\nonumber\\
			& = C_3x_3^{(\tau-2)(k+1-\tau)+3-k}x_4\cdots x_k + C_4x_4^{(\tau-2)(k+1-\tau)+4-k}x_5\cdots x_k+ \dots + C_{k} x_k^{(\tau-2)(k+1-\tau)}
		\end{align}
		We now show that all these terms evaluate to a finite integral when plugged into~\eqref{eq:int2ind}. Indeed,
		
		\begin{align}
			& \int_0^1\int_0^{x_3}\dots\int_0^{x_{k-1}}x_l^{(\tau-2)(k+1-\tau)+l-k}x_{l+1}\cdots x_k x_3^{1-\tau}\cdots x_k^{1-\tau} dx_k dx_{k-1}\dots dx_3\nonumber\\
			& = \int_0^1\int_0^{x_3}\dots\int_0^{x_{l-1}}x_l^{(\tau-2)(l-\tau)-1} x_3^{1-\tau}\cdots x_{l-1}^{1-\tau} dx_l dx_{l-1}\dots dx_3\nonumber\\
			& = \int_0^1\int_0^{x_3}\dots\int_0^{x_{l-2}}x_{l-1}^{(\tau-2)(l-1-\tau)-1} x_3^{1-\tau}\cdots x_{l-2}^{1-\tau} dx_l dx_{l-2}\dots dx_3<\infty
		\end{align}
		as the index $l-k$ remains at least 3. Therefore,~\eqref{eq:intmaxclique} is finite as well. 
	\end{proof}

\end{document}